\newcommand{\T}{\mathcal{T}}
\newcommand{\PP}{\mathbb{P}}
\newcommand{\EE}{\mathbb{E}}
\newif\ifdetails
\newcommand{\DETAIL}[1]%
{\ifdetails\par\fbox{\begin{minipage}{0.9\linewidth}\textit{Detail:}
      #1\end{minipage}}\par\fi}
\newcommand{\TODO}[1]%
{\ifdetails\par\fbox{\begin{minipage}{0.9\linewidth}\textbf{TODO:}
      #1\end{minipage}}\par\fi}
\newtheorem{lemma}{Lemma}
\newtheorem{prop}[lemma]{Proposition}
\newtheorem{theorem}[lemma]{Theorem}
\theoremstyle{remark}
\newtheorem{remark}{Remark}
\newtheorem{conjecture}{Conjecture}
\newtheorem{question}{Question}
\newcommand{\CB}{\operatorname{CB}}
\newcommand{\C}{\operatorname{C}}
\newcommand{\E}{\operatorname{E}}
\newcommand{\Crt}{\operatorname{crt}}
\newcommand{\Cr}{\operatorname{cr}}
\newcommand{\old}[1]{{}}
\title{Inducibility in binary trees and crossings in random tanglegrams}
\author{\'Eva Czabarka, L\'aszl\'o A. Sz\'ekely }
\address{\'Eva Czabarka and L\'aszl\'o A. Sz\'ekely\\ Department of Mathematics \\ University of South Carolina \\ Columbia, SC 29208 \\ USA}
\email{\{czabarka,szekely\}@math.sc.edu }
\thanks{The second author was supported in part by the  NSF DMS,  grant number 1300547,  the third author was supported by the National
Research Foundation of South Africa, grant number 96236.}
\author{Stephan Wagner}
\address{Stephan Wagner\\ Department of Mathematical Sciences \\ Stellenbosch University \\ Private Bag X1, Matieland 7602 \\ South Africa}
\email{swagner@sun.ac.za}
\subjclass[2010]{Primary 05C05; secondary 05C30, 05C62, 05C80, 05D05, 05D40, 92B10}
\keywords{trees, subtrees, caterpillar, complete binary tree, inducibility, tanglegram, crossing number}
\begin{document}

\begin{abstract}
In analogy to other concepts of a similar nature, we define the inducibility of a rooted binary tree. Given a fixed rooted binary tree $B$ with $k$ leaves, we let $\gamma(B,T)$ be the proportion of all subsets of $k$ leaves in $T$ that induce a tree isomorphic to $B$. The inducibility of $B$ is $\limsup_{|T| \to \infty} \gamma(B,T)$. We determine the inducibility in some special cases, show that every binary tree has positive inducibility and prove that caterpillars are the only binary trees with inducibility $1$. We also formulate some open problems and conjectures on the inducibility. Finally, we present an application to crossing numbers of random tanglegrams.
\end{abstract}

\maketitle

\section{introduction}
The concept of inducibility of graphs goes back to a 1975 paper by Pippenger and Golumbic \cite{Pippenger}. For two graphs $G$ and $H$ with $k$ and $n$ vertices respectively, let $\mathcal{I}(G,H)$ be the number of induced subgraphs of $H$ isomorphic to $G$, and $I(G,H) = \mathcal{I}(G,H)/\binom{n}{k}$ its normalized version, which lies between $0$ and $1$. Furthermore, let $I(G,n) = \max_{|H| = n} I(G,H)$ be the maximum over all $n$-vertex graphs $H$. The limit
$$\lim_{n \to\infty} I(G,n)$$
is called the \emph{inducibility} of $G$. The concept is still investigated vigorously to this day, see \cite{Even-Zohar,Hatami} for some recent results.

Bubeck and Linial \cite{BubeckLinial} recently analyzed the distribution of subtrees of fixed order by isomorphism type, and defined inducibility of trees in this context. A notable difference in the definitions is that Bubeck and Linial normalize by the number of $k$-vertex subtrees ($k$-vertex subsets that induce a tree) rather than the total number of all $k$-vertex subsets.

In analogy to these concepts, we are concerned with extremal problems in rooted binary trees, 
and in particular we define the inducibility of a rooted binary tree.

Let $T$ be a rooted binary tree. Here and in the following, whenever we write ``binary tree'', we refer to rooted binary trees, i.e., every vertex is either a leaf or it has exactly two children. The number of leaves of $T$ is denoted by $|T|$. We consider two binary trees as distinct if and only if they are not isomorphic. Each of the $\binom{|T|}{k}$ choices of $k$ leaves induces another rooted binary tree by taking the smallest subtree containing all $k$ leaves and suppressing vertices of degree $2$, see Figure~\ref{fig:example}. The induced binary tree has a root in the natural way. 

The study of induced binary subtrees of (rooted or unrooted) binary trees is topical in the phylogenetic literature \cite{SempleSteel}.
For $k \leq 3$, there is only one possibility for the shape of the induced tree (up to isomorphism). However, this changes for larger values of $k$. It is well known that the number of rooted binary trees with $k$ leaves up to isomorphism is the Wedderburn-Etherington number $W_k$. The first few of these numbers are
$$1, 1, 1, 2, 3, 6, 11, 23, 46, 98, 207, 451, 983, \ldots$$

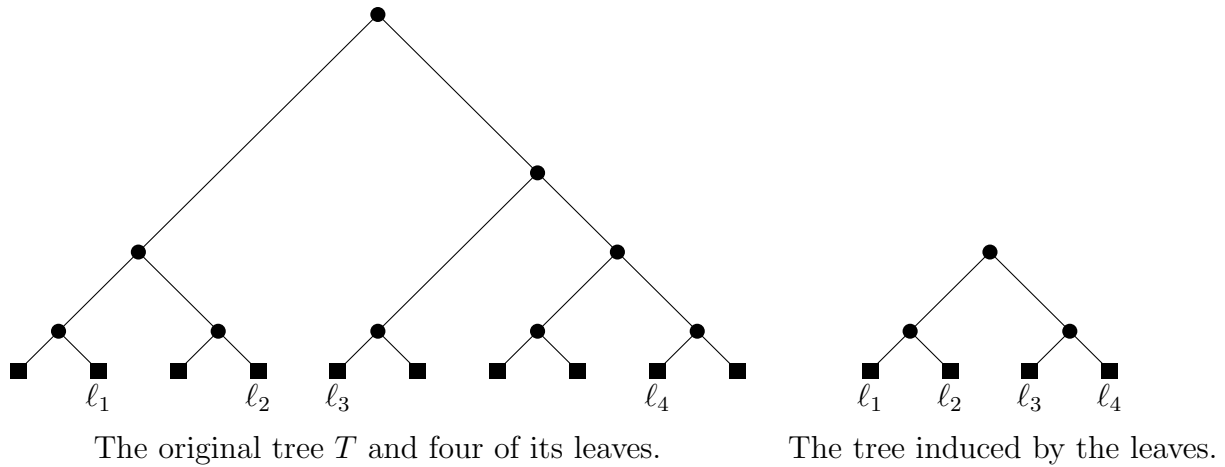
\begin{figure}[htbp]
\begin{center}
\begin{tikzpicture}[scale=0.7]

        \node[fill=black,rectangle,inner sep=3pt]  at (0,0) {};
        \node[fill=black,rectangle,inner sep=3pt]  at (1.5,0) {};
        \node[fill=black,rectangle,inner sep=3pt]  at (3,0) {};
        \node[fill=black,rectangle,inner sep=3pt]  at (4.5,0) {};
        \node[fill=black,rectangle,inner sep=3pt]  at (6,0) {};
        \node[fill=black,rectangle,inner sep=3pt]  at (7.5,0) {};
        \node[fill=black,rectangle,inner sep=3pt]  at (9,0) {};
        \node[fill=black,rectangle,inner sep=3pt]  at (10.5,0) {};
        \node[fill=black,rectangle,inner sep=3pt]  at (12,0) {};
        \node[fill=black,rectangle,inner sep=3pt]  at (13.5,0) {};

        \node[fill=black,circle,inner sep=2pt]  at (6.75,6.75) {};
        \node[fill=black,circle,inner sep=2pt]  at (2.25,2.25) {};
        \node[fill=black,circle,inner sep=2pt]  at (9.75,3.75) {};
        \node[fill=black,circle,inner sep=2pt]  at (11.25,2.25) {};
        \node[fill=black,circle,inner sep=2pt]  at (0.75,0.75) {};
        \node[fill=black,circle,inner sep=2pt]  at (3.75,0.75) {};
        \node[fill=black,circle,inner sep=2pt]  at (6.75,0.75) {};
        \node[fill=black,circle,inner sep=2pt]  at (9.75,0.75) {};
        \node[fill=black,circle,inner sep=2pt]  at (12.75,0.75) {};

	\draw (0,0)--(6.75,6.75)--(13.5,0);
	\draw (4.5,0)--(2.25,2.25);
	\draw (6,0)--(9.75,3.75);
	\draw (9,0)--(11.25,2.25);
	\draw (1.5,0)--(0.75,0.75);
	\draw (3,0)--(3.75,0.75);
	\draw (7.5,0)--(6.75,0.75);
	\draw (10.5,0)--(9.75,0.75);
	\draw (12,0)--(12.75,0.75);

	\node at (1.5,-0.5) {$\ell_1$};
	\node at (4.5,-0.5) {$\ell_2$};
	\node at (6,-0.5) {$\ell_3$};
	\node at (12,-0.5) {$\ell_4$};

	\node at (6.75,-1.5) {The original tree $T$ and four of its leaves.};

        \node[fill=black,rectangle,inner sep=3pt]  at (16,0) {};
        \node[fill=black,rectangle,inner sep=3pt]  at (17.5,0) {};
        \node[fill=black,rectangle,inner sep=3pt]  at (19,0) {};
        \node[fill=black,rectangle,inner sep=3pt]  at (20.5,0) {};

        \node[fill=black,circle,inner sep=2pt]  at (16.75,0.75) {};
        \node[fill=black,circle,inner sep=2pt]  at (18.25,2.25) {};
        \node[fill=black,circle,inner sep=2pt]  at (19.75,0.75) {};

	\draw (16,0)--(18.25,2.25)--(20.5,0);
	\draw (16.75,0.75)--(17.5,0);
	\draw (19.75,0.75)--(19,0);

	\node at (16,-0.5) {$\ell_1$};
	\node at (17.5,-0.5) {$\ell_2$};
	\node at (19,-0.5) {$\ell_3$};
	\node at (20.5,-0.5) {$\ell_4$};

	\node at (18.5,-1.5) {The tree induced by the leaves.};
\end{tikzpicture}
\end{center}
\caption{A rooted binary tree and the tree induced by four of its leaves.}\label{fig:example}
\end{figure}

For a fixed binary tree $B$ with $k$ leaves, let us write $c(B,T)$ for the number of sets of $k$ leaves in $T$ that induce a tree isomorphic to $B$. We will be specifically interested in the quotient
$$\gamma(B,T) = \frac{c(B,T)}{\binom{|T|}{k}}$$
and the maximum of this quantity in the limit, the \emph{inducibility}
$$i(B) = \limsup_{|T| \to \infty} \gamma(B,T).$$
We remark that the inducibility of a binary tree is a number in the interval $[0,1]$ by definition. In the following session, we determine the exact value of the inducibility for two special classes of binary trees (caterpillars and ``even'' trees, which are a generalization of complete binary trees). We also prove that every binary tree has positive inducibility and that caterpillars are the only binary trees with inducibility $1$, see Section~\ref{sec:general}. In our last section we use our results to establish the order of magnitude of the crossing number of a random tanglegram. The introduction to tanglegrams and their
crossing numbers is postponed to Section~\ref{tangle}.

\section{Special cases}

The simplest case to consider is the \emph{caterpillar}, a binary tree whose internal vertices form a path (see Figure~\ref{fig:cater}). Let us denote the caterpillar with $k$ leaves by $\C_k$.

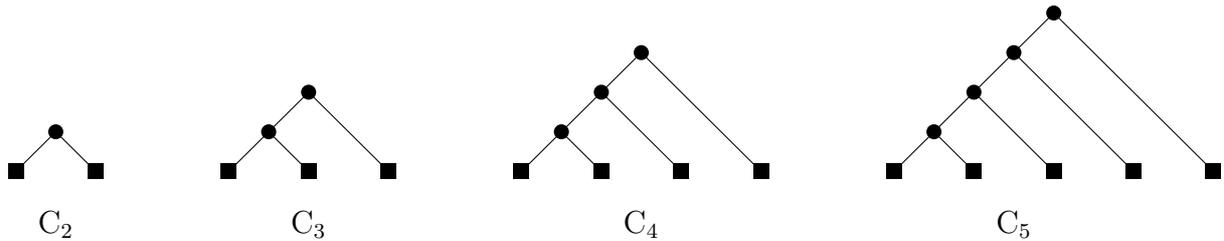
\begin{figure}[htbp]
\begin{center}
\begin{tikzpicture}[scale=0.7]

        \node[fill=black,rectangle,inner sep=3pt]  at (0,0) {};
        \node[fill=black,rectangle,inner sep=3pt]  at (1.5,0) {};

        \node[fill=black,circle,inner sep=2pt]  at (0.75,0.75) {};

	\draw (0,0)--(0.75,0.75)--(1.5,0);

	\node at (0.75,-1) {$\C_2$};

        \node[fill=black,rectangle,inner sep=3pt]  at (4,0) {};
        \node[fill=black,rectangle,inner sep=3pt]  at (5.5,0) {};
        \node[fill=black,rectangle,inner sep=3pt]  at (7,0) {};

        \node[fill=black,circle,inner sep=2pt]  at (4.75,0.75) {};
        \node[fill=black,circle,inner sep=2pt]  at (5.5,1.5) {};

	\draw (4,0)--(5.5,1.5)--(7,0);
	\draw (4.75,0.75)--(5.5,0);

	\node at (5.5,-1) {$\C_3$};

        \node[fill=black,rectangle,inner sep=3pt]  at (9.5,0) {};
        \node[fill=black,rectangle,inner sep=3pt]  at (11,0) {};
        \node[fill=black,rectangle,inner sep=3pt]  at (12.5,0) {};
        \node[fill=black,rectangle,inner sep=3pt]  at (14,0) {};

        \node[fill=black,circle,inner sep=2pt]  at (10.25,0.75) {};
        \node[fill=black,circle,inner sep=2pt]  at (11,1.5) {};
        \node[fill=black,circle,inner sep=2pt]  at (11.75,2.25) {};

	\draw (9.5,0)--(11.75,2.25)--(14,0);
	\draw (10.25,0.75)--(11,0);
	\draw (11,1.5)--(12.5,0);

	\node at (11.75,-1) {$\C_4$};

        \node[fill=black,rectangle,inner sep=3pt]  at (16.5,0) {};
        \node[fill=black,rectangle,inner sep=3pt]  at (18,0) {};
        \node[fill=black,rectangle,inner sep=3pt]  at (19.5,0) {};
        \node[fill=black,rectangle,inner sep=3pt]  at (21,0) {};
        \node[fill=black,rectangle,inner sep=3pt]  at (22.5,0) {};

        \node[fill=black,circle,inner sep=2pt]  at (17.25,0.75) {};
        \node[fill=black,circle,inner sep=2pt]  at (18,1.5) {};
        \node[fill=black,circle,inner sep=2pt]  at (18.75,2.25) {};
        \node[fill=black,circle,inner sep=2pt]  at (19.5,3) {};

	\draw (16.5,0)--(19.5,3)--(22.5,0);
	\draw (17.25,0.75)--(18,0);
	\draw (18,1.5)--(19.5,0);
	\draw (18.75,2.25)--(21,0);

	\node at (18.75,-1) {$\C_5$};
\end{tikzpicture}
\end{center}
\caption{Caterpillars.}\label{fig:cater}
\end{figure}

\begin{theorem}\label{thm:cater}
The inducibility of the $k$-leaf caterpillar $\C_k$ is $1$ for every $k$.
\end{theorem}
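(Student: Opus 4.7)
The plan is to exhibit a sequence $T_n$ with $|T_n| \to \infty$ for which $\gamma(\C_k, T_n) = 1$ exactly (not merely in the limit), since the inducibility is always at most $1$ by definition. The natural candidate is $T_n = \C_n$ itself, and the core claim is this: for every $n \geq k$, every $k$-element subset of leaves of $\C_n$ induces a tree isomorphic to $\C_k$. Once this claim is established, $c(\C_k, \C_n) = \binom{n}{k}$, hence $\gamma(\C_k, \C_n) = 1$ for all $n \geq k$, and therefore $i(\C_k) = 1$.

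To verify the claim, I would first fix notation by labeling the internal vertices of $\C_n$ along its spine as $v_1, v_2, \ldots, v_{n-1}$ (with $v_1$ deepest and $v_{n-1}$ the root), and its leaves as $\ell_1, \ldots, \ell_n$ so that $v_1$ has the two leaf children $\ell_1, \ell_2$ and, for $j \geq 2$, $v_j$ has children $v_{j-1}$ and $\ell_{j+1}$. Given a chosen subset $S = \{\ell_{i_1}, \ldots, \ell_{i_k}\}$ with $i_1 < i_2 < \cdots < i_k$, I would identify which internal vertices survive the suppression of degree-$2$ vertices, namely those $v_j$ whose two subtrees in $\C_n$ each meet $S$. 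Since the left subtree of $v_j$ (for $j \geq 2$) contains $\ell_1, \ldots, \ell_j$ and the right subtree is the single leaf $\ell_{j+1}$, a short case analysis shows the surviving internal vertices are precisely $v_{i_2-1}, v_{i_3-1}, \ldots, v_{i_k-1}$. They form a chain: $v_{i_s - 1}$ has children $v_{i_{s-1}-1}$ and $\ell_{i_s}$ for $s \geq 3$, while the bottom vertex $v_{i_2 - 1}$ has children $\ell_{i_1}$ and $\ell_{i_2}$ (treating the special case $i_1 = 1, i_2 = 2$ so that $v_{i_2-1} = v_1$ is the original cherry). This is the defining recursive structure of $\C_k$.

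No step is really an obstacle; the only mild care needed is the bookkeeping at the bottom of the caterpillar (the case $i_1 = 1, i_2 = 2$ versus the generic case). As an even shorter alternative, I could invoke the characterization that a binary tree is a caterpillar if and only if every internal vertex has at least one leaf child, and observe that this property is preserved when passing from $\C_n$ to the suppressed minimal subtree on any subset of its leaves, so the induced tree is a caterpillar on $k$ leaves, i.e.\ $\C_k$.
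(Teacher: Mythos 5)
Your proposal is correct and takes essentially the same approach as the paper: the paper's entire proof is the one-line observation that every tree induced by leaves of a caterpillar is again a caterpillar, so $\gamma(\C_k,\C_n)=1$. You simply supply the detailed verification (the spine bookkeeping, or equivalently the leaf-child characterization of caterpillars) that the paper leaves implicit.
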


\begin{proof}
Simply note that every tree induced by the leaves of a caterpillar is again a caterpillar, thus $\gamma(\C_k,\C_n) = 1$.
\end{proof}

The first nontrivial case occurs for binary trees with four leaves, which is the first case where there are nonisomorphic binary trees with the same number of leaves. As it turns out, the only other binary tree in this case no longer has inducibility $1$. This tree is a special case of a complete binary tree (see Figure~\ref{fig:complete}). We will denote the complete binary tree of height $h$ (with $2^h$ leaves) by $\CB_h$.

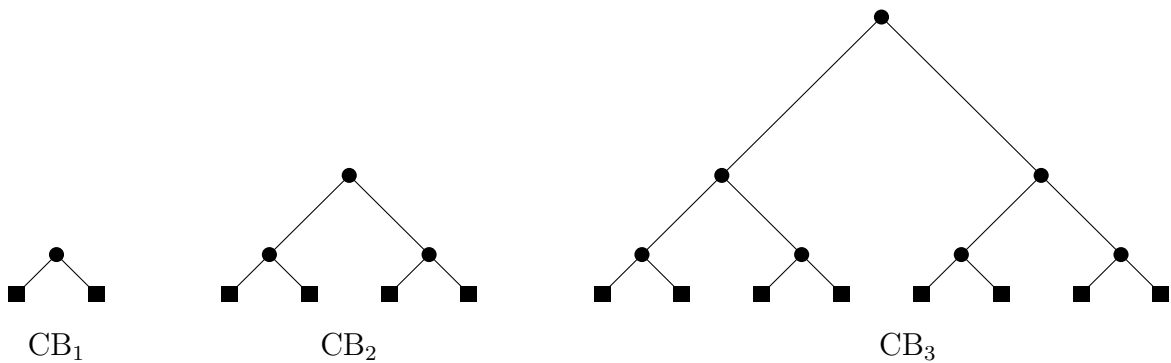
\begin{figure}[htbp]
\begin{center}
\begin{tikzpicture}[scale=0.7]

        \node[fill=black,rectangle,inner sep=3pt]  at (0,0) {};
        \node[fill=black,rectangle,inner sep=3pt]  at (1.5,0) {};

        \node[fill=black,circle,inner sep=2pt]  at (0.75,0.75) {};

	\draw (0,0)--(0.75,0.75)--(1.5,0);

	\node at (0.75,-1) {$\CB_1$};

        \node[fill=black,rectangle,inner sep=3pt]  at (4,0) {};
        \node[fill=black,rectangle,inner sep=3pt]  at (5.5,0) {};
        \node[fill=black,rectangle,inner sep=3pt]  at (7,0) {};
        \node[fill=black,rectangle,inner sep=3pt]  at (8.5,0) {};

        \node[fill=black,circle,inner sep=2pt]  at (4.75,0.75) {};
        \node[fill=black,circle,inner sep=2pt]  at (6.25,2.25) {};
        \node[fill=black,circle,inner sep=2pt]  at (7.75,0.75) {};

	\draw (4,0)--(6.25,2.25)--(8.5,0);
	\draw (4.75,0.75)--(5.5,0);
	\draw (7.75,0.75)--(7,0);

	\node at (6.25,-1) {$\CB_2$};

        \node[fill=black,rectangle,inner sep=3pt]  at (11,0) {};
        \node[fill=black,rectangle,inner sep=3pt]  at (12.5,0) {};
        \node[fill=black,rectangle,inner sep=3pt]  at (14,0) {};
        \node[fill=black,rectangle,inner sep=3pt]  at (15.5,0) {};
        \node[fill=black,rectangle,inner sep=3pt]  at (17,0) {};
        \node[fill=black,rectangle,inner sep=3pt]  at (18.5,0) {};
        \node[fill=black,rectangle,inner sep=3pt]  at (20,0) {};
        \node[fill=black,rectangle,inner sep=3pt]  at (21.5,0) {};

        \node[fill=black,circle,inner sep=2pt]  at (11.75,0.75) {};
        \node[fill=black,circle,inner sep=2pt]  at (14.75,0.75) {};
        \node[fill=black,circle,inner sep=2pt]  at (17.75,0.75) {};
        \node[fill=black,circle,inner sep=2pt]  at (20.75,0.75) {};
        \node[fill=black,circle,inner sep=2pt]  at (13.25,2.25) {};
        \node[fill=black,circle,inner sep=2pt]  at (19.25,2.25) {};
        \node[fill=black,circle,inner sep=2pt]  at (16.25,5.25) {};

	\draw (11,0)--(16.25,5.25)--(21.5,0);
	\draw (15.5,0)--(13.25,2.25);
	\draw (17,0)--(19.25,2.25);
	\draw (12.5,0)--(11.75,0.75);
	\draw (14,0)--(14.75,0.75);
	\draw (18.5,0)--(17.75,0.75);
	\draw (20,0)--(20.75,0.75);

	\node at (16.75,-1) {$\CB_3$};

\end{tikzpicture}
\end{center}
\caption{Complete binary trees.}\label{fig:complete}
\end{figure}

\begin{prop}\label{prop:cb2}
For a tree $T$ with $n$ leaves, we have
$$c(\CB_2,T) \leq \frac{n(n-1)(n-2)(3n-5)}{168}.$$
Equality holds if and only if $T$ is a complete binary tree. In particular, the inducibility of $\CB_2$ is $\frac37$.
\end{prop}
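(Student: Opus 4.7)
The plan is to set up a clean recursion for $c(\CB_2,T)$ by decomposing at the root, and then reduce the upper bound to a short polynomial identity. Suppose $T$ has root with two pendant subtrees $T_1,T_2$ containing $n_1,n_2$ leaves, $n_1+n_2=n$. A 4-leaf subset that induces $\CB_2$ splits between the two subtrees as either $4{+}0$, $0{+}4$, or $2{+}2$. The first two cases contribute $c(\CB_2,T_1)+c(\CB_2,T_2)$. In the $2{+}2$ case the induced tree is \emph{automatically} $\CB_2$: any two leaves in a binary tree induce a cherry $\CB_1$, and the root of $T$ joins the two cherries into $\CB_2$. Thus
$$c(\CB_2,T)=c(\CB_2,T_1)+c(\CB_2,T_2)+\binom{n_1}{2}\binom{n_2}{2}.$$

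Write $g(n)=\frac{n(n-1)(n-2)(3n-5)}{168}$. I would prove $c(\CB_2,T)\le g(n)$ by induction on $n$, with trivial base $n\le 2$. The inductive step reduces, after substituting the hypothesis into the recursion, to the polynomial identity
$$g(n_1+n_2)-g(n_1)-g(n_2)-\binom{n_1}{2}\binom{n_2}{2}=\frac{n_1n_2(n_1-n_2)^2}{14},$$
which is verified by expanding $g(x)=\frac{3x^4-14x^3+21x^2-10x}{168}$ and collecting terms. The right-hand side is manifestly non-negative and vanishes precisely when $n_1=n_2$. Thus equality $c(\CB_2,T)=g(n)$ forces both $n_1=n_2$ and extremality of $T_1,T_2$; iterating, $n=2^h$ and $T=\CB_h$, and the recursion conversely gives $c(\CB_2,\CB_h)=g(2^h)$ exactly.

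For the inducibility, the upper bound yields
$$\gamma(\CB_2,T)\le\frac{g(n)}{\binom{n}{4}}=\frac{3n-5}{7(n-3)},$$
which tends to $\tfrac37$, so $i(\CB_2)\le\tfrac37$. Taking $T=\CB_h$ along $n=2^h\to\infty$ achieves the bound exactly, giving $\gamma(\CB_2,\CB_h)\to\tfrac37$ and hence $i(\CB_2)=\tfrac37$.

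The only non-routine step is spotting the factorization $\frac{n_1n_2(n_1-n_2)^2}{14}$; the verification itself is mechanical, but this clean form is what simultaneously delivers a tight bound for every $n$ and a sharp equality characterization. A looser polynomial majorant would still yield $i(\CB_2)=\tfrac37$ but would obscure the ``if and only if $T$ is a complete binary tree'' part of the statement.
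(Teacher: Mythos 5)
Your proof is correct and takes essentially the same route as the paper: the identical root decomposition $c(\CB_2,T)=c(\CB_2,T_1)+c(\CB_2,T_2)+\binom{n_1}{2}\binom{n_2}{2}$ (the paper just spells out additionally that $3{+}1$ splits always induce $\C_4$, never $\CB_2$) followed by induction on $n$ with the even split $n_1=n_2$ as the extremal case. The only difference is cosmetic: where the paper maximizes $P(k)+P(n-k)+\binom{k}{2}\binom{n-k}{2}$ by differentiating in $k$, you verify the exact identity $g(n_1+n_2)-g(n_1)-g(n_2)-\binom{n_1}{2}\binom{n_2}{2}=\frac{n_1n_2(n_1-n_2)^2}{14}$ (which checks out), a slightly cleaner way to obtain the same bound and the equality characterization.
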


\begin{proof}
We prove the statement by induction on $n$. For simplicity, let us write
$$P(n) = \frac{n(n-1)(n-2)(3n-5)}{168}$$
for the polynomial bound.
For $n \leq 3$, the inequality is trivial since there clearly cannot be any copies of $\CB_2$ in $T$, while $P(n) = 0$ for $n \leq 2$ and $P(3) = \frac17$. For the induction step, suppose that the two branches $T_1$ and $T_2$ of $T$ have $k$ and $n-k$ leaves respectively. There are three possibilities for a subset of four leaves:
\begin{itemize}
\item All four leaves belong to the same branch: clearly, the total number of these  subsets that induce $\CB_2$ is
$$c(\CB_2,T_1) + c(\CB_2,T_2).$$
\item Three of the leaves belong to one branch, the fourth leaf to the other. Any such set of four leaves induces a caterpillar $\C_4$.
\item Each of the branches contains two of the leaves: in this case, the four leaves always induce $\CB_2$.
\end{itemize}
Combining the three cases, we find that
$$c(\CB_2,T) = c(\CB_2,T_1) + c(\CB_2,T_2) + \binom{k}{2} \binom{n-k}{2}.$$
Now we can invoke the induction hypothesis, which gives us
$$c(\CB_2,T) \leq P(k) + P(n-k) + \binom{k}{2} \binom{n-k}{2}.$$
The derivative of the right side with respect to $k$ is
$$- \frac{(n-2k)(n^2-8kn+8k^2)}{14},$$
so we find that it attains its unique maximum at $k = n/2$ (taking $k$ in the interval $[1,n-1]$, the two roots of the second factor are minima). Consequently,
$$c(\CB_2,T) \leq 2P \Big(\frac{n}2\Big) + \binom{n/2}{2}^2 = P(n),$$
which is what we wanted to prove. By the induction hypothesis, equality can only hold if $k = n/2$ and both branches are complete binary trees. In this case, $T$ is a complete binary tree as well.

The statement on the inducibility follows by taking the limit
$$\lim_{n \to \infty} \frac{P(n)}{\binom{n}{4}} = \frac37.$$
\end{proof}

Along the same lines, we also obtain:

\begin{prop}
The inducibility of the tree $\operatorname{A}^5_1$ shown on the left in Figure~\ref{fig:five} is $\frac23$. This limit is achieved in complete binary trees. 
\end{prop}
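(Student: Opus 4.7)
The plan is to follow the blueprint of Proposition~\ref{prop:cb2}. The tree $\operatorname{A}^5_1$ is the binary tree whose root has a $\C_2$-branch and a $\C_3$-branch, i.e.\ the unique $(2,3)$-split at the root.

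First I would set up a recursion. If the root of $T$ has branches $T_1,T_2$ with $k$ and $n-k$ leaves, I partition the $5$-element leaf subsets $S$ of $T$ by $a=|S\cap T_1|$. The cases $a\in\{0,5\}$ together contribute $c(\operatorname{A}^5_1,T_1)+c(\operatorname{A}^5_1,T_2)$. For $1\le a\le 4$ the induced tree has the root of $T$ as its root, with branches equal to the trees induced inside $T_1$ and $T_2$. Since $\C_2$ and $\C_3$ are the unique binary trees with $2$ and $3$ leaves, the induced tree equals $\operatorname{A}^5_1$ exactly when $\{a,5-a\}=\{2,3\}$, and it is never $\operatorname{A}^5_1$ when $\{a,5-a\}=\{1,4\}$ (a leaf-child at the root is incompatible with $\operatorname{A}^5_1$). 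This yields
\[
c(\operatorname{A}^5_1,T) = c(\operatorname{A}^5_1,T_1) + c(\operatorname{A}^5_1,T_2) + \binom{k}{2}\binom{n-k}{3} + \binom{k}{3}\binom{n-k}{2}.
\]

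Next I would introduce a degree-$5$ polynomial upper bound $Q(n)$ with leading coefficient $\tfrac{1}{180}$, so that $Q(n)/\binom{n}{5}\to \tfrac{2}{3}$. The coefficients of $Q$ are pinned down by the self-similarity requirement
\[
Q(n) = 2\,Q(n/2) + 2\binom{n/2}{2}\binom{n/2}{3},
\]
which, after expanding $Q$ in the basis $\binom{n}{i}$, becomes a small linear system; its solution produces $Q$ explicitly and simultaneously guarantees $Q(2^h)=c(\operatorname{A}^5_1,\CB_h)$ for all~$h$.

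The induction step then reduces to the scalar inequality
\[
Q(k)+Q(n-k)+\binom{k}{2}\binom{n-k}{3}+\binom{k}{3}\binom{n-k}{2} \leq Q(n)
\]
for integer $k$ with $1\le k\le n-1$, with equality only at $k=n/2$. Writing the left-hand side as $f(k)$, the symmetry $f(k)=f(n-k)$ forces $(n-2k)$ to divide $f'(k)$; the quotient is a cubic in $k$ whose sign on $(0,n)$ must be checked, exactly as the quadratic $n^2-8kn+8k^2$ was handled in the $\CB_2$ argument. Once this sign analysis is done, induction closes and the equality case forces both branches of $T$ to be complete binary trees of equal size, so $T=\CB_h$. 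Taking $n\to\infty$ along powers of $2$ then gives inducibility $\tfrac{2}{3}$.

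The main obstacle is the monotonicity step: after dividing out the guaranteed root $k=n/2$, one must check that the residual cubic keeps a constant sign on $(0,n)$. In the $\CB_2$ proof the analogous object was a quadratic with a clean factorization; here, with $f$ of degree five, one degree is added and the verification is more intricate, though entirely routine once $Q$ has been written out explicitly.
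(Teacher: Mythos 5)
Your recursion is correct, and the plan is faithful to the $\CB_2$ blueprint, but the blueprint does not transfer: the exact interpolating polynomial you propose is \emph{not} an upper bound for $c(\operatorname{A}^5_1,T)$, and this fails at arbitrarily large $n$, not just in base cases. Solving your self-similarity equation $Q(n)=2Q(n/2)+2\binom{n/2}{2}\binom{n/2}{3}$ determines all coefficients except the linear one, and imposing $Q(2^h)=c(\operatorname{A}^5_1,\CB_h)$ (equivalently $Q(1)=0$) forces
$$Q(n)=\frac{n^5}{180}-\frac{n^4}{21}+\frac{5n^3}{36}-\frac{n^2}{6}+\frac{22n}{315},$$
which gives $Q(3)=-\frac{1}{21}<0=c(\operatorname{A}^5_1,T)$ for any $3$-leaf tree. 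This deficit then propagates through your own recursion: since both $n\mapsto c(\operatorname{A}^5_1,\E_n)$ along $n=3\cdot 2^h$ and $Q$ satisfy the same halving recursion, the difference $d(n)=c(\operatorname{A}^5_1,\E_n)-Q(n)$ doubles at each step, so $d(3\cdot 2^h)=2^h/21$. Concretely, $c(\operatorname{A}^5_1,\E_6)=6>Q(6)=\frac{124}{21}$, $c(\operatorname{A}^5_1,\E_{12})=612>Q(12)=\frac{12852}{21}$, and in general the even trees on $3\cdot 2^h$ leaves exceed $Q$ by roughly $n/63$. Hence the scalar inequality you reduce to cannot be true at integer points (no sign analysis of the residual cubic can rescue it), and the equality characterization ``only complete binary trees attain $Q$'' is also false as stated. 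The reason Proposition~\ref{prop:cb2} worked is a happy accident of degree four: there $P(n)$ happens to dominate the counts at \emph{all} $n$ with slack at non-powers of $2$; the degree-five interpolant instead dips below the true counts.

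The repair is to give up exactness in lower-order terms, which is what the paper does: since $\operatorname{A}^5_1=\E_5$, the proposition is the case $r=5$ of Theorem~\ref{thm:even}. One proves the pure leading-term bound $c(\E_5,T)\leq c_5 n^5=\frac{n^5}{180}$ by induction --- the base cases $n<5$ are now harmless because $c_5 n^5\geq 0$ --- and the split optimization is handled by the second part of Lemma~\ref{lem:f_max1} with $k=2$, i.e.\ $\frac{x^2(1-x)^3+x^3(1-x)^2}{1-x^5-(1-x)^5}\leq \frac{1}{15}$. This yields $i(\operatorname{A}^5_1)\leq 5!\cdot\frac{1}{180}=\frac{2}{3}$, and the matching lower bound comes from the recursion $c(\operatorname{A}^5_1,\CB_h)=2c(\operatorname{A}^5_1,\CB_{h-1})+2\binom{2^{h-1}}{2}\binom{2^{h-1}}{3}$, whose $\limsup$/$\liminf$ analysis gives $\lim_{h\to\infty} c(\operatorname{A}^5_1,\CB_h)/2^{5h}=\frac{1}{180}$, so the limit $\frac23$ is attained along complete binary trees. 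Your recursion and your limit computation are exactly right; the missing idea is that one must bound only the coefficient of $n^5$ rather than match the complete-binary-tree counts exactly.
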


Except for $\C_5$ and $\operatorname{A}_1^5$, there is only one more tree with five leaves, shown on the right in Figure~\ref{fig:five}. Determining its inducibility seems quite a bit harder. Numerical experiments indicate that it is close to $\frac14$.

\begin{question}
What is the inducibility $i(\operatorname{A}^5_2)$ of the tree $\operatorname{A}^5_2$ shown on the right in Figure~\ref{fig:five}?
\end{question}

\begin{figure}[htbp]
\begin{center}
\begin{tikzpicture}

        \node[fill=black,rectangle,inner sep=3pt]  at (-3,0) {};
        \node[fill=black,rectangle,inner sep=3pt]  at (-1.5,0) {};
        \node[fill=black,rectangle,inner sep=3pt]  at (0,0) {};
        \node[fill=black,rectangle,inner sep=3pt]  at (1.5,0) {};
        \node[fill=black,rectangle,inner sep=3pt]  at (3,0) {};

        \node[fill=black,circle,inner sep=2pt]  at (-2.25,0.75) {};
        \node[fill=black,circle,inner sep=2pt]  at (-1.5,1.5) {};
        \node[fill=black,circle,inner sep=2pt]  at (0,3) {};
        \node[fill=black,circle,inner sep=2pt]  at (2.25,0.75) {};

	\draw (-3,0)--(0,3)--(3,0);
	\draw (-1.5,0)--(-2.25,0.75);
	\draw (1.5,0)--(2.25,0.75);
	\draw (0,0)--(-1.5,1.5);

	\node at (0,-1) {$\operatorname{A}^5_1$};

        \node[fill=black,rectangle,inner sep=3pt]  at (5,0) {};
        \node[fill=black,rectangle,inner sep=3pt]  at (6.5,0) {};
        \node[fill=black,rectangle,inner sep=3pt]  at (8,0) {};
        \node[fill=black,rectangle,inner sep=3pt]  at (9.5,0) {};
        \node[fill=black,rectangle,inner sep=3pt]  at (11,0) {};

        \node[fill=black,circle,inner sep=2pt]  at (7.25,0.75) {};
        \node[fill=black,circle,inner sep=2pt]  at (8.75,2.25) {};
        \node[fill=black,circle,inner sep=2pt]  at (8,3) {};
        \node[fill=black,circle,inner sep=2pt]  at (10.25,0.75) {};

	\draw (5,0)--(8,3)--(11,0);
	\draw (6.5,0)--(8.75,2.25);
	\draw (8,0)--(7.25,0.75);
	\draw (9.5,0)--(10.25,0.75);

	\node at (8,-1) {$\operatorname{A}^5_2$};

\end{tikzpicture}
\end{center}
\caption{Two binary rooted trees with five leaves.}\label{fig:five}
\end{figure}
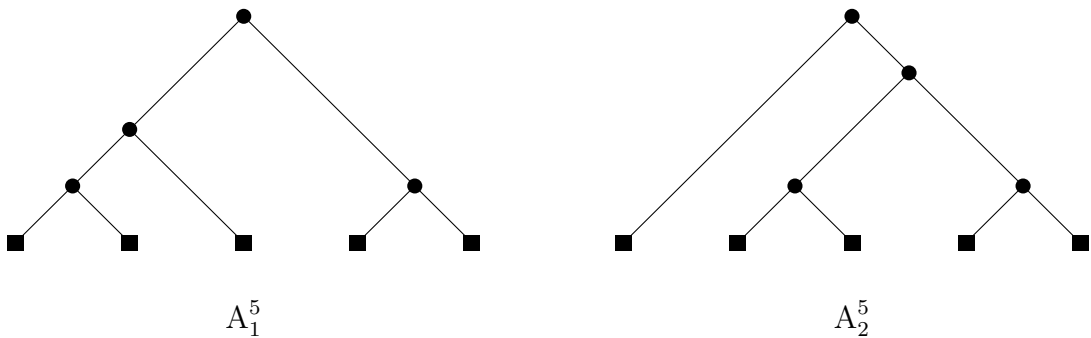

\begin{remark}
It might seem paradoxical that the tree $\operatorname{A}^5_1$, which contains $\CB_2$ as a subtree, has greater inducibility than $\CB_2$. However, this is in essence just a consequence of the fact that a $2$-$3$ split in the binomial distribution on five objects has greater probability than a $2$-$2$ split on four objects.
\end{remark}

Proposition~\ref{prop:cb2} generalizes to complete binary trees of arbitrary height and even to a more general class of binary trees: let us call a binary tree {\em even} if for every internal vertex, the number of leaves in the two subtrees below it differ by at most 1. 
It is easy to see that there is a unique even tree for every given number $n$ of leaves, which we denote by $\E_n$. For example, $\operatorname{A}_5^1=\E_5$ and $\CB_h = \E_{2^h}$, see also Figure~\ref{fig:even} for another example.

\begin{figure}[htbp]
\begin{center}
\begin{tikzpicture}[scale=0.7]

        \node[fill=black,rectangle,inner sep=3pt]  at (0,0) {};
        \node[fill=black,rectangle,inner sep=3pt]  at (1.5,0) {};
        \node[fill=black,rectangle,inner sep=3pt]  at (3,0) {};
        \node[fill=black,rectangle,inner sep=3pt]  at (4.5,0) {};
        \node[fill=black,rectangle,inner sep=3pt]  at (6,0) {};
        \node[fill=black,rectangle,inner sep=3pt]  at (7.5,0) {};
        \node[fill=black,rectangle,inner sep=3pt]  at (9,0) {};

        \node[fill=black,circle,inner sep=2pt]  at (0.75,0.75) {};
        \node[fill=black,circle,inner sep=2pt]  at (1.5,1.5) {};
        \node[fill=black,circle,inner sep=2pt]  at (5.25,0.75) {};
        \node[fill=black,circle,inner sep=2pt]  at (8.25,0.75) {};
        \node[fill=black,circle,inner sep=2pt]  at (4.5,4.5) {};
        \node[fill=black,circle,inner sep=2pt]  at (6.75,2.25) {};

	\draw (0,0)--(4.5,4.5)--(9,0);
	\draw (4.5,0)--(6.75,2.25);
	\draw (3,0)--(1.5,1.5);
	\draw (1.5,0)--(0.75,0.75);
	\draw (6,0)--(5.25,0.75);
	\draw (7.5,0)--(8.25,0.75);

	\node at (4.5,-1) {$\E_7$};
\end{tikzpicture}
\end{center}
\caption{The even tree $\E_7$.}\label{fig:even}
\end{figure}
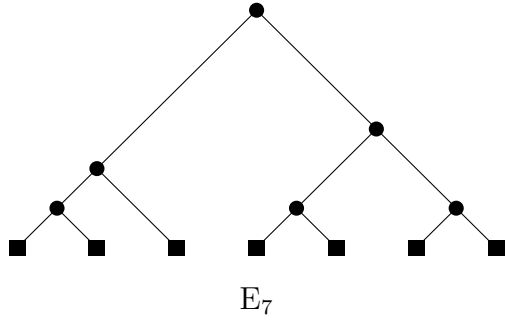

To determine the inducibility of even trees,  we first need a simple lemma.

\begin{lemma}\label{lem:f_max1}
For every positive integer $k\geq 1$, the function
$$f(x) = \frac{x^k(1-x)^k}{1-x^{2k}-(1-x)^{2k}}$$
on the interval $(0,1)$ has its maximum at $x = \frac12$:
$$f(x) \leq f \Big(\frac12\Big) = \frac{1}{2^{2k}-2}$$
for all $x \in (0,1)$.

Likewise, the function
$$g(x) = \frac{x^k(1-x)^{k+1}+x^{k+1}(1-x)^k}{1-x^{2k+1}-(1-x)^{2k+1}}$$
on the interval $(0,1)$ has its maximum at $x = \frac12$:
$$g(x) \leq g \Big(\frac12\Big) = \frac{1}{2^{2k}-1}$$
for all $x \in (0,1)$.

\end{lemma}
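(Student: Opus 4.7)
The plan is to prove both inequalities by showing that the reciprocals $1/f(x)$ and $1/g(x)$ are minimized at $x=1/2$. Both $f$ and $g$ are symmetric under $x \leftrightarrow 1-x$, so $x = 1/2$ is automatically a critical point; the content of the lemma is global minimality.

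For $f$, I would expand the denominator by the binomial theorem as
\[
1 - x^{2k} - (1-x)^{2k} = \sum_{j=1}^{2k-1}\binom{2k}{j}\,x^j(1-x)^{2k-j},
\]
divide by $x^k(1-x)^k$, and reindex with $i = j - k$ to get
\[
\frac{1}{f(x)} = \sum_{i=1-k}^{k-1}\binom{2k}{k+i}\,u^i, \qquad u = \frac{x}{1-x}.
\]
Using $\binom{2k}{k+i}=\binom{2k}{k-i}$ to pair index $i$ with $-i$, the AM--GM inequality $u^i + u^{-i} \geq 2$ (with equality iff $u = 1$, i.e.\ $x = 1/2$) immediately gives
\[
\frac{1}{f(x)} \geq \binom{2k}{k} + 2\sum_{i=1}^{k-1}\binom{2k}{k+i} = 2^{2k}-2,
\]
which is the desired bound.

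For $g$, the key preliminary simplification is $x^k(1-x)^{k+1} + x^{k+1}(1-x)^k = x^k(1-x)^k$, so $g$ has the same numerator as $f$, but with $2k+1$ in place of $2k$ in the denominator. Expanding by the binomial theorem and pairing index $m$ with $1-m$ via $\binom{2k+1}{k+m}=\binom{2k+1}{k+1-m}$ reduces the problem to showing
\[
h_m(x) := x^m(1-x)^{1-m} + x^{1-m}(1-x)^m \geq 1
\]
for each integer $m \leq 0$ and $x \in (0,1)$, with equality at $x=1/2$. Summing the $k$ pair contributions then yields $1/g(x) \geq \sum_{j=1}^{k}\binom{2k+1}{j} = 2^{2k}-1$, as required.

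The pointwise inequality $h_m(x) \geq 1$ is the main obstacle. The substitution $u = x/(1-x)$ turns it into $u^{-n}+u^{1+n} \geq 1+u$ for $u>0$ and $n = -m \geq 0$. The case $n=0$ is an identity; for $n \geq 1$, I would set $\phi(u) = u^{-n}+u^{1+n}-1-u$ and verify that $\phi(1)=\phi'(1)=0$ while $\phi''(u) = n(n+1)\bigl[u^{-n-2}+u^{n-1}\bigr] > 0$ on $(0,\infty)$, so strict convexity forces $u=1$ to be the unique minimum (value $0$). Translating back, this identifies $x=1/2$ as the unique minimizer of $1/g$, completing the argument.
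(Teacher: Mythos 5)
Your proof is correct, and for $f$ it is essentially verbatim the paper's argument: expand $1/f$ by the binomial theorem and pair the term of index $j$ with that of index $2k-j$, using the AM--GM bound $u^i + u^{-i} \geq 2$; your count $\binom{2k}{k} + 2\sum_{i=1}^{k-1}\binom{2k}{k+i} = 2^{2k}-2$ checks out. For $g$ you keep the same skeleton (binomial expansion of the reciprocal, pairing of symmetric terms, $k$ pairs contributing $\sum_{j=1}^{k}\binom{2k+1}{j} = 2^{2k}-1$) but deviate in two ways. First, you simplify the numerator via $x^k(1-x)^{k+1}+x^{k+1}(1-x)^k = x^k(1-x)^k$, which the paper does not do; this buys you a cleaner pairwise target, namely $h_m(x) \geq 1$ with the constant $1$ on the right, rather than the paper's comparison of each pair against the two-term numerator itself. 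Second, you prove the pairwise inequality by convexity of $\phi(u) = u^{-n}+u^{1+n}-1-u$ (correctly: $\phi(1)=\phi'(1)=0$ and $\phi''>0$), whereas the paper disposes of the same inequality by the algebraic factorization $\big((1-x)^{k-j}-x^{k-j}\big)\big((1-x)^{k+1-j}-x^{k+1-j}\big) \geq 0$. In fact your two routes coincide more than you may have noticed: $\phi(u) = u^{-n}(1-u^n)(1-u^{n+1})$, so the calculus step can be replaced by this one-line factorization, which is exactly the paper's argument rewritten in the variable $u = x/(1-x)$. Both versions are complete; yours is marginally longer on the key step but gains a slicker normalization of $g$.
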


\begin{proof}
By the binomial theorem, we have
$$\frac{1}{f(x)} = \frac{\sum_{j=1}^{2k-1} \binom{2k}{j} x^j(1-x)^{2k-j}}{x^k(1-x)^k} = \sum_{j=1}^{2k-1} \binom{2k}{j} x^{j-k}(1-x)^{k-j}.$$
Now we group the terms pairwise (the $j$-th term is paired with the $(2k-j)$-th): since
$$x^{j-k}(1-x)^{k-j} + x^{k-j}(1-x)^{j-k} \geq 2$$
by the inequality between arithmetic and geometric mean, we obtain
$$\frac{1}{f(x)} \geq \sum_{j=1}^{2k-1} \binom{2k}{j} = 2^{2k}-2,$$
which proves the lemma for the function $f(x)$. The function $g(x)$ is treated in a similar way:
$$\frac{1}{g(x)} = \frac{\sum_{j=1}^{2k} \binom{2k+1}{j} x^j(1-x)^{2k+1-j}}{x^k(1-x)^{k+1}+x^{k+1}(1-x)^k},$$
and the inequality
$$x^j(1-x)^{2k+1-j} + x^{2k+1-j}(1-x)^{j} \geq x^k(1-x)^{k+1}+x^{k+1}(1-x)^k$$
is readily seen to be equivalent to
$$\big((1-x)^{k-j}-x^{k-j}\big)\big((1-x)^{k+1-j}-x^{k+1-j}\big) \geq 0,$$
which clearly holds for all $x \in (0,1)$ and all $j \in \{1,2,\ldots,2k\}$. The desired inequality for $g(x)$ follows.
\end{proof}

\begin{theorem}\label{thm:even}
Define the sequence $c_1,c_2,\ldots$ of rational numbers recursively by $c_1 = 1$ and
\begin{align*}
c_{2s} &= \frac{c_s^2}{2^{2s}-2},\\
c_{2s+1} &= \frac{c_s c_{s+1}}{2^{2s}-1}.
\end{align*}
The inducibility of the even tree $\E_r$ is equal to $r! \cdot c_r$ for every $r \geq 1$.
\end{theorem}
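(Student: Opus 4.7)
The plan is to proceed by strong induction on $r$, generalizing the proof of Proposition~\ref{prop:cb2}. The base case $r=1$ holds trivially, since $c(\E_1,T)=n$ and $1!\cdot c_1=1$. For the inductive step, I would first derive the branch recursion. With $s=\lfloor r/2\rfloor$, the two branches of the even tree $\E_r$ are $\E_s$ and $\E_{r-s}$ (both equal to $\E_s$ when $r$ is even). If $T$ decomposes into branches $T_1,T_2$ of sizes $k$ and $n-k$, then a set of $r$ leaves induces $\E_r$ precisely when either all $r$ leaves lie in one branch (and induce $\E_r$ there), or they split between $T_1$ and $T_2$ according to the root of $\E_r$. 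This yields, in the even case $r=2s$,
\[
c(\E_{2s},T)=c(\E_{2s},T_1)+c(\E_{2s},T_2)+c(\E_s,T_1)\,c(\E_s,T_2),
\]
and in the odd case $r=2s+1$,
\[
c(\E_{2s+1},T)=c(\E_{2s+1},T_1)+c(\E_{2s+1},T_2)+c(\E_s,T_1)\,c(\E_{s+1},T_2)+c(\E_{s+1},T_1)\,c(\E_s,T_2).
\]

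For the upper bound, I would take a sequence $T_n$ with $|T_n|=n$ and $\gamma(\E_r,T_n)\to\beta$ and pass to a subsequence on which $k_n/n\to x\in[0,1]$. When $x\in(0,1)$, dividing the recursion by $\binom{n}{r}$ and taking limits gives, in the even case,
\[
\beta \;\leq\; \beta\bigl(x^{2s}+(1-x)^{2s}\bigr)+\binom{2s}{s}\bigl(s!\,c_s\bigr)^2 x^s(1-x)^s,
\]
using the inductive hypothesis $\limsup\gamma(\E_s,\cdot)\leq s!\,c_s$. Rearranging and applying the first part of Lemma~\ref{lem:f_max1} yields $\beta\leq\binom{2s}{s}(s!)^2c_s^2/(2^{2s}-2)=(2s)!\,c_{2s}=r!\,c_r$; the odd case is handled analogously via the second part of the lemma. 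For achievability, applying the recursion to $T=\E_n$, whose branches are $\E_{\lceil n/2\rceil}$ and $\E_{\lfloor n/2\rfloor}$, and inducting on $r$ gives $c(\E_r,\E_n)=c_r\,n^r+O(n^{r-1})$: the leading coefficient satisfies exactly the defining relation of $c_r$ because the choice $k=\lfloor n/2\rfloor$ corresponds to the maximizer $x=\tfrac12$ identified in Lemma~\ref{lem:f_max1}. Hence $\gamma(\E_r,\E_n)\to r!\,c_r$.

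The main obstacle is the boundary case $x\in\{0,1\}$ in the upper bound, where the asymptotic recursion degenerates to the vacuous inequality $\beta\leq\beta$. One remedy is an iteration on the larger branch: if $k_n/n\to 1$, the cross term and the small-branch term are both $o(1)$ after normalization, so the larger branch itself forms a sequence with $\gamma(\E_r,\cdot)\to\beta$, and the interior/boundary dichotomy applies again. A cleaner approach, in the spirit of Proposition~\ref{prop:cb2}, is to prove a direct polynomial upper bound $c(\E_r,T)\leq P_r(n)$ with $P_r$ defined recursively from the branch recursion itself. The technically delicate step is then verifying that $P_r(n)/n^r\to c_r$: the lower bound comes from the achievability calculation on $\E_n$, while the matching upper bound reduces to Lemma~\ref{lem:f_max1} applied to the leading order of the recursion.
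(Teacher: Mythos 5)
Your branch recursions, constants, and the achievability half are sound: the claim $c(\E_r,\E_n)=c_r n^r+O(n^{r-1})$ does follow by induction on $r$ from the divide-and-conquer recurrence, and the paper reaches the same conclusion more softly by taking $\limsup$ and $\liminf$ of the recursion for $c(\E_r,\E_n)$ and solving the resulting fixed-point inequalities (e.g. $\limsup \le 2^{1-2s}\limsup + 2^{-2s}c_s^2$). The genuine gap is the one you flag yourself, and your first remedy does not close it. For $x\in(0,1)$ your limit inequality rearranges to $\beta\le (2s)!\,c_s^2\,x^s(1-x)^s/\bigl(1-x^{2s}-(1-x)^{2s}\bigr)$, which via Lemma~\ref{lem:f_max1} gives $\beta\le r!\,c_r$ (note also that bounding the branch terms by $\beta$ tacitly uses that $\beta$ is the \emph{global} limsup $i(\E_r)$, so that the branch sequences, whose sizes tend to infinity, have limsup at most $\beta$); the inequality is vacuous exactly at $x\in\{0,1\}$. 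But ``the interior/boundary dichotomy applies again'' is not a termination argument: each iteration may land in the boundary case again, producing a new extremal sequence but never a contradiction. And if you instead try to descend inside a single tree until a balanced split is met, you must control the accumulated per-step losses, which are of order $\sum_i (m_i/n_i)^s$ over the descent path ($m_i$ the small-branch sizes); for paths whose splits hug a threshold $\delta$ this sum can be of order $\delta^{s-1}\log(n/n')$, which need not vanish, so a stopping-rule argument requires genuine quantitative work that your sketch does not supply.

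Your fallback is exactly the paper's proof, and the step you call ``technically delicate'' is precisely where the paper is slick: rather than defining $P_r$ implicitly by the extremal recursion $P_r(n)=\max_k\bigl(P_r(k)+P_r(n-k)+\cdots\bigr)$ and then analyzing $P_r(n)/n^r$, guess the pure monomial $P_r(n)=c_r n^r$ and prove $c(\E_r,T)\le c_r n^r$ by double induction on $n$ and $r$. Homogeneity turns the induction step into an exact inequality valid for \emph{every} split ratio $x=k/n\in(0,1)$: for $r=2s$ it reads $c_{2s}\bigl(x^{2s}+(1-x)^{2s}\bigr)+c_s^2 x^s(1-x)^s\le c_{2s}$, which after substituting $c_s^2=(2^{2s}-2)c_{2s}$ is literally the first part of Lemma~\ref{lem:f_max1}; the odd case, with cross terms $c(\E_s,T_1)c(\E_{s+1},T_2)+c(\E_{s+1},T_1)c(\E_s,T_2)$, uses the second part. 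Since the inequality holds uniformly on $(0,1)$, the boundary case never arises, no subsequence extraction is needed, and no lower-order error term has to be carried through the induction (contrast Theorem~\ref{thm:cater_pos}, where a $-b_k n^{k-1}$ correction is genuinely needed). Replace your limit argument by this monomial induction and your outline becomes a complete proof, essentially identical to the paper's.
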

\begin{proof}
We prove by double induction on $n$ and $r$ that
$$c(\E_r,T) \leq c_r n^{r}$$
for every binary tree $T$ with $n$ leaves. The case $r=1$ is trivial. Moreover, the inequality certainly also holds for $n < r$. 

For the induction step, consider a binary $T$ with $n$ leaves and let $T_1$ and $T_2$ be the two branches of $T$ as before. The number of leaves of $T_1$ will be denoted by $k$. For even $r$ ($r=2s$), we note that the two branches of $\E_{2s}$ are both isomorphic to $\E_s$, which gives us
$$c(\E_{2s},T) = c(\E_{2s},T_1) + c(\E_{2s},T_2) + c(\E_s,T_1) c(\E_s,T_2).$$
The induction hypothesis with respect to $n$ gives us $c(\E_{2s},T_1) \leq c_{2s} k^{2s}$ as well as $c(\E_{2s},T_2) \leq c_{2s} (n-k)^{2s}$, the induction hypothesis with respect to $r$ yields $c(\E_s,T_1) \leq c_s k^{s}$ and $c(\E_s,T_2) \leq c_s (n-k)^{s}$. Putting them together, we obtain
$$c(\E_{2s},T) \leq c_{2s} \big( k^{2s} + (n-k)^{2s} \big) + c_s^2 k^{s} (n-k)^{s}.$$
Dividing by $n^{2s}$ and setting $x = k/n$ gives us
\begin{align*}
\frac{c(\E_{2s},T)}{n^{2s}} &\leq c_{2s} \big(x^{2s} + (1-x)^{2s} \big) + c_s^2 x^{s} (1-x)^{s} \\
& = c_{2s} \big(x^{2s} + (1-x)^{2s} \big) + (2^{2s}-2) c_{2s} x^{s} (1-x)^{s} \\
&\leq c_{2s},
\end{align*}
where the last step follows from Lemma~\ref{lem:f_max1} by a simple manipulation. This proves the desired inequality. 

The case that $r$ is odd ($r=2s+1$) is treated in the same way: here we have
$$c(\E_{2s+1},T) = c(\E_{2s+1},T_1) + c(\E_{2s+1},T_2) + c(\E_s,T_1) c(\E_{s+1},T_2) + c(\E_{s+1},T_1) c(\E_{s},T_2),$$
and the induction step runs along the same lines (now using the second part of Lemma~\ref{lem:f_max1}).

In the limit, we now obtain
$$i(\E_r) \leq \lim_{n \to \infty} \frac{c_r n^{r}}{\binom{n}{r}} = r! \cdot c_r.$$
The reverse inequality follows by considering $c(\E_r,\E_n)$: we show that
$$\lim_{n \to \infty} \frac{c(\E_r,\E_n)}{n^r} = c_r,$$
again using induction (with respect to $r$). The cases $r=1$ and $r=2$ are trivial, so we focus on the induction step. For $r=2s$, we get
$$c(\E_{2s},\E_n) = c(\E_{2s},\E_{\lfloor n/2 \rfloor}) + c(\E_{2s},\E_{\lceil n/2 \rceil})+ c(\E_s,\E_{\lfloor n/2 \rfloor})c(\E_s,\E_{\lceil n/2 \rceil}).$$
Divide by $n^{2s}$ and take the limit superior:
\begin{align*}
\limsup_{n \to \infty} \frac{c(\E_{2s},\E_n)}{n^{2s}} &\leq 2^{-2s} \limsup_{n \to \infty} \frac{c(\E_{2s},\E_{\lfloor n/2 \rfloor})}{(n/2)^{2s}} +  2^{-2s} \limsup_{n \to \infty} \frac{c(\E_{2s},\E_{\lceil n/2 \rceil})}{(n/2)^{2s}} \\
&\quad + 2^{-2s} \Big( \limsup_{n \to \infty} \frac{c(\E_{s},\E_{\lfloor n/2 \rfloor})}{(n/2)^{s}} \Big) \Big( \limsup_{n \to \infty} \frac{c(\E_{s},\E_{\lceil n/2 \rceil})}{(n/2)^{s}} \Big).
\end{align*}
By the induction hypothesis, this implies
$$\limsup_{n \to \infty} \frac{c(\E_{2s},\E_n)}{n^{2s}} \leq 2^{1-2s} \Big( \limsup_{n \to \infty} \frac{c(\E_{2s},\E_n)}{n^{2s}} \Big) + 2^{-2s} \cdot c_s^2$$
and thus
$$\limsup_{n \to \infty} \frac{c(\E_{2s},\E_n)}{n^{2s}} \leq \frac{c_s^2}{2^{2s}-2} = c_{2s}.$$
The same reasoning also yields
$$\liminf_{n \to \infty} \frac{c(\E_{2s},\E_n)}{n^{2s}} \geq \frac{c_s^2}{2^{2s}-2} = c_{2s},$$
which shows that
$$\lim_{n \to \infty} \frac{c(\E_{2s},\E_n)}{n^{2s}} = \frac{c_s^2}{2^{2s}-2} = c_{2s}.$$
This completes the proof for even $r$, and the identity
$$\lim_{n \to \infty} \frac{c(\E_{2s+1},\E_n)}{n^{2s}} = \frac{c_sc_{s+1}}{2^{2s}-1} = c_{2s+1}$$
is established in the same way.
\end{proof}

We conjecture that a finite analogue of Theorem~\ref{thm:even} holds as well:

\begin{conjecture}
For every $n\geq k$, $\E_n$ has the largest number of copies of $\E_k$ among all binary trees with $n$ leaves.
\end{conjecture}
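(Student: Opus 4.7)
The plan is to proceed by strong induction on $n$, paralleling the inductive structure used in the proof of Theorem~\ref{thm:even}. Write $a_{k,n} = c(\E_k, \E_n)$. The base cases are routine: if $n < k$ both sides vanish, if $n = k$ both are $1$, and if $k \leq 3$ then $c(\E_k, T) = \binom{n}{k}$ for every binary $T$ with $n$ leaves, so $\E_n$ trivially attains the maximum.

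For the induction step, fix $n$ and $k$, and let $T$ be a binary tree with $n$ leaves whose branches $T_1, T_2$ have sizes $j$ and $n-j$. The recursive identities used in the proof of Theorem~\ref{thm:even},
\begin{align*}
c(\E_{2s}, T) &= c(\E_{2s}, T_1) + c(\E_{2s}, T_2) + c(\E_s, T_1)\, c(\E_s, T_2), \\
c(\E_{2s+1}, T) &= c(\E_{2s+1}, T_1) + c(\E_{2s+1}, T_2) + c(\E_s, T_1)\, c(\E_{s+1}, T_2) + c(\E_{s+1}, T_1)\, c(\E_s, T_2),
\end{align*}
combined with the inductive hypothesis applied to $T_1, T_2$ (whose sizes are strictly less than $n$) for each of the indices appearing, reduce the claim to a numerical statement about the sequences $a_{\cdot,\cdot}$. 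For $k = 2s$ it reads: the function
$$F(j) \;=\; a_{2s, j} + a_{2s, n-j} + a_{s, j}\, a_{s, n-j} \qquad (1 \leq j \leq n-1)$$
is maximized at $j = \lfloor n/2 \rfloor$, where $F(\lfloor n/2 \rfloor) = a_{2s,n}$ by the branching of $\E_n$; the analogue for odd $k$ uses the mixed products appearing in the second recursion.

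This discrete extremal inequality is the main obstacle. Its continuous relaxation was handled in Lemma~\ref{lem:f_max1}, which places the maximum of the corresponding smooth function at $x = 1/2$, but transferring the \emph{maximum-at-$1/2$} phenomenon to exact integer optimization is not automatic. I would pursue two complementary strategies. The first is a swapping argument: show by an inner induction on $n$ that the forward difference $F(j+1) - F(j)$ is nonnegative whenever $j + 1 \leq \lfloor n/2 \rfloor$. Expanding this difference produces quantities of the form $a_{r, j+1} - a_{r, j}$, which count copies of $\E_r$ in $\E_{j+1}$ that use a distinguished leaf; these themselves satisfy a clean recursion one can induct on. The second strategy is to exploit the self-similar structure of $\E_n$ more directly: write $a_{k,n}$ explicitly as a sum indexed by how the $k$ leaves distribute across the recursive branches of $\E_n$ (a distribution ultimately governed by the binary expansion of $n$), and verify algebraically that the balanced split at the root is extremal.

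I expect the main technical difficulty to lie in establishing an auxiliary monotonicity or log-concavity lemma for $a_{k,n}$ that is preserved under the doubling steps $n \mapsto 2n$ and $n \mapsto 2n+1$. The asymptotic form of such a statement already follows from Theorem~\ref{thm:even} (via $a_{k,n}/n^k \to c_k$), so the real work is in making the comparison \emph{effective} for every finite $n$. A secondary subtlety arises in the odd case: the individual mixed products $a_{s, j}\, a_{s+1, n-j}$ and $a_{s+1, j}\, a_{s, n-j}$ are not symmetric under $j \leftrightarrow n-j$ -- only their sum is -- and the swapping argument must be adjusted accordingly.
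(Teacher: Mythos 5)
You should know at the outset that the statement you are addressing is posed in the paper as an open conjecture: the paper contains no proof of it, only the asymptotic analogue (Theorem~\ref{thm:even}) and the case $k=4$ in the weaker form of Proposition~\ref{prop:cb2} (whose bound is attained only when $n$ is a power of $2$, so even $k=4$ is not fully settled for general $n$). Your argument must therefore stand entirely on its own, and it does not: it is a research plan, not a proof. The reduction you carry out is correct as far as it goes --- the recursions for $c(\E_{2s},T)$ and $c(\E_{2s+1},T)$, with strong induction on $n$ carrying all $k$ simultaneously, and the observation that $F(\lfloor n/2\rfloor)=a_{2s,n}$ by the root branching of $\E_n$, do reduce the conjecture to the discrete statement that $F(j)=a_{2s,j}+a_{2s,n-j}+a_{s,j}\,a_{s,n-j}$ is maximized at the balanced split (plus the odd analogue). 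But that discrete inequality \emph{is} the entire content of the conjecture, and you never establish it; you only name two strategies. The first (swapping) already contains a misstep as described: expanding $F(j+1)-F(j)$ does not produce only differences of the form $a_{r,j+1}-a_{r,j}$; the product term contributes $a_{s,j+1}\,a_{s,n-j-1}-a_{s,j}\,a_{s,n-j}$, whose sign requires a ratio-monotonicity statement such as $a_{s,j+1}/a_{s,j}\ge a_{s,n-j}/a_{s,n-j-1}$ for $j+1\le n-j-1$ --- a log-concavity-type claim of essentially the same depth as the conjecture itself, which would moreover have to be shown stable under the doubling steps. Plain monotonicity of $a_{r,j}$ in $j$ (which does hold, since $\E_j$ arises from $\E_{j+1}$ by deleting a suitable leaf) is far too weak to decide this sign.

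Nor can the asymptotic ingredients you cite close the gap. Lemma~\ref{lem:f_max1} controls only the leading-order continuous relaxation; the finite quantities $a_{k,j}$ deviate from $c_k j^k$ by lower-order terms whose signs and magnitudes are exactly what determine the location of the discrete maximum for small and moderate $n$, and the convergence $a_{k,n}/n^k\to c_k$ in Theorem~\ref{thm:even} is extracted from $\limsup$/$\liminf$ manipulations that come with no effective error bound --- so ``making the comparison effective for every finite $n$,'' which you correctly flag as the real work, is new mathematics rather than a refinement of the existing argument. Your second strategy (explicit sums governed by the binary expansion of $n$) is a restatement of the problem, not a method. In short: you have correctly located where the difficulty lives and correctly anticipated the needed auxiliary lemma, but nothing beyond the easy reduction is proved, and the conjecture remains open after your proposal exactly as it was before it.
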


\section{General results on the inducibility}\label{sec:general}

\begin{theorem}\label{thm:complete}
Complete binary trees contain every fixed binary tree $B$ in a positive proportion: the limit
$$\lim_{h \to \infty} \gamma(B,\CB_h)$$
exists and is strictly positive. In particular, every binary tree $B$ has positive inducibility.
\end{theorem}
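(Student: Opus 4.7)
The plan is to prove the statement by induction on the number of leaves $k$ of $B$, using a recursion that expresses $c(B,\CB_h)$ in terms of $c(B,\CB_{h-1})$ and $c(B_i,\CB_{h-1})$ for the two branches $B_1,B_2$ of $B$. Since $\CB_h$ has two branches, each isomorphic to $\CB_{h-1}$, any $k$-subset of its leaves either lies entirely in one branch (contributing $2c(B,\CB_{h-1})$), or splits as $k_1+k_2=k$ with $k_i\geq 1$ leaves in each branch, in which case the induced tree has root branches equal to the trees induced on each side. A quick case analysis on whether $B_1\cong B_2$ gives
\begin{equation*}
c(B,\CB_h)=2c(B,\CB_{h-1})+\varepsilon(B)\cdot c(B_1,\CB_{h-1})\,c(B_2,\CB_{h-1}),
\end{equation*}
where $\varepsilon(B)=2$ if $B_1\not\cong B_2$ and $\varepsilon(B)=1$ if $B_1\cong B_2$ (one then interprets the product as $c(B_1,\CB_{h-1})^2$).

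Next I would rescale by setting $b_h(B)=c(B,\CB_h)/2^{hk}$, noting that $k_1+k_2=k$ so that all the powers of $2$ align nicely. The recursion becomes
\begin{equation*}
b_h(B)=2^{1-k}\,b_{h-1}(B)+2^{-k}\varepsilon(B)\,b_{h-1}(B_1)\,b_{h-1}(B_2).
\end{equation*}
This is a linear recurrence in $b_h(B)$ with contraction factor $\alpha=2^{1-k}<1$ (valid for $k\geq 2$; the case $k=1$ is trivial since then $b_h\equiv 1$) and a forcing term depending only on $b_{h-1}(B_1),b_{h-1}(B_2)$, where $B_1,B_2$ have strictly fewer leaves than $B$.

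Now induct on $k$. The base case $k=1$ gives $b_h=1$ for all $h$. For the inductive step, the inductive hypothesis supplies limits $b_*(B_i)=\lim_{h\to\infty}b_h(B_i)>0$ for $i=1,2$, so the forcing term converges to $\beta_*=2^{-k}\varepsilon(B)\,b_*(B_1)b_*(B_2)>0$. A standard argument for linear recurrences with a convergent inhomogeneity (write $b_h(B)-b_*=\alpha(b_{h-1}(B)-b_*)+(\beta_h-\beta_*)$ with $b_*=\beta_*/(1-\alpha)$, and note that $|\beta_h-\beta_*|\to 0$ combined with $\alpha<1$ forces $b_h(B)\to b_*$) then gives
\begin{equation*}
b_*(B)=\lim_{h\to\infty}b_h(B)=\frac{\varepsilon(B)\,b_*(B_1)\,b_*(B_2)}{2^k-2}>0.
\end{equation*}
Since $\binom{2^h}{k}\sim (2^h)^k/k!$, this yields $\lim_{h\to\infty}\gamma(B,\CB_h)=k!\cdot b_*(B)>0$, which is exactly the claim.

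The potential obstacles are minor and of bookkeeping nature: one has to get the $\varepsilon(B)$ correct in the two cases $B_1\cong B_2$ and $B_1\not\cong B_2$, keep the exponents of $2$ straight after rescaling, and observe that the contraction factor $2^{1-k}$ is strictly less than $1$ precisely when $k\geq 2$ so that the induction actually closes. As sanity checks, the formula $b_*(B)=\varepsilon(B)b_*(B_1)b_*(B_2)/(2^k-2)$ reproduces $k!b_*(\C_k)=1$ for caterpillars and $4!b_*(\CB_2)=3/7$, in agreement with Theorem~\ref{thm:cater} and Proposition~\ref{prop:cb2}, and more generally matches Theorem~\ref{thm:even} for even trees.
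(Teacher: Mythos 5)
Your argument is correct and is essentially the paper's own proof: you use the same branch decomposition giving the same recursion $c(B,\CB_h)=2c(B,\CB_{h-1})+\varepsilon(B)\,c(B_1,\CB_{h-1})c(B_2,\CB_{h-1})$ with the same $1$-versus-$2$ case distinction according to whether $B_1\cong B_2$, and your rescaling by $2^{hk}$ followed by the contraction estimate for $b_h=\alpha b_{h-1}+\beta_h$ is just a cleaner packaging of the paper's step of dividing by $\binom{2^h}{k}$ and equating $\liminf$ and $\limsup$ (inducting on leaf count rather than height is immaterial, as branches strictly decrease in both measures). The only flaw is your first sanity check, which is harmless to the proof but false for $k\geq 4$: your own formula gives $k!\,b_*(\C_k)=\frac{k!}{2}\prod_{j=1}^{k-1}(2^j-1)^{-1}$ (e.g.\ $\frac{4}{7}$ for $k=4$), which is the value attained in the proof of Theorem~\ref{thm:cater_pos}, not $1$ --- the inducibility $i(\C_k)=1$ of Theorem~\ref{thm:cater} is approached along caterpillars, not along complete binary trees, so $\lim_{h\to\infty}\gamma(\C_k,\CB_h)$ does not equal $i(\C_k)$ there.
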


\begin{proof}
We prove the statement by induction on the height of $B$ (the greatest distance from the root to a leaf). For height $1$, the statement is trivial: there is only one such tree (namely $B = \C_2$), and in this case we clearly have $\gamma(B,\CB_h) = 1$ for all $h \geq 1$. Now suppose that the statement is true for trees of height at most $H$, and consider a tree $B$ of height $H+1$. Its two branches $B_1$ and $B_2$ clearly both have height at most $H$, so we can apply the induction hypothesis to them. Moreover, we have
\begin{equation}\label{eq:cbh}
c(B,\CB_h) = 2c(B,\CB_{h-1}) + 2 c(B_1,\CB_{h-1}) c(B_2,\CB_{h-1})
\end{equation}
if $B_1$ and $B_2$ are not identical (isomorphic), and
$$c(B,\CB_h) = 2c(B,\CB_{h-1}) + c(B_1,\CB_{h-1}) c(B_2,\CB_{h-1})$$
if they are. This is obtained by distinguishing two possible cases in the same way as we did earlier: the leaves inducing $B$ are either completely contained in one of the branches, or they split in such a way that one part induces $B_1$ in one branch and the rest induces $B_2$ in the other.
Assume that $B_1$ and $B_2$ are distinct, the other case being analogous. Next let $k,k_1,k_2$ be the number of leaves of $B, B_1,B_2$ respectively ($k = k_1 + k_2$). We get
\begin{align*}
\gamma(B,\CB_h) &= \frac{c(B,\CB_h)}{\binom{2^h}{k}} = \frac{2c(B,\CB_{h-1})}{\binom{2^h}{k}} + \frac{2 c(B_1,\CB_{h-1}) c(B_2,\CB_{h-1})}{\binom{2^h}{k}} \\
&= \frac{2\binom{2^{h-1}}{k}}{\binom{2^h}{k}} \cdot \frac{c(B,\CB_{h-1})}{\binom{2^{h-1}}{k}} + \frac{2\binom{2^{h-1}}{k_1}\binom{2^{h-1}}{k_2}}{\binom{2^{h}}{k}} \cdot \frac{c(B_1,\CB_{h-1})}{\binom{2^{h-1}}{k_1}} \cdot \frac{c(B_2,\CB_{h-1})}{\binom{2^{h-1}}{k_2}} \\
&= \frac{2\binom{2^{h-1}}{k}}{\binom{2^h}{k}} \cdot \gamma(B,\CB_{h-1}) + \frac{2\binom{2^{h-1}}{k_1}\binom{2^{h-1}}{k_2}}{\binom{2^{h}}{k}} \cdot \gamma(B_1,\CB_{h-1})  \gamma(B_2,\CB_{h-1}).
\end{align*}
Next we take $\liminf$ and $\limsup$ as $h \to \infty$ (as we did in the proof of Theorem~\ref{thm:even}) and apply the induction hypothesis to $B_1$ and $B_2$. Solving the resulting equations gives us
$$\liminf_{h \to \infty} \gamma(B,\CB_h) = \limsup_{h \to \infty} \gamma(B,\CB_h) = \frac{\binom{k}{k_1}}{2^{k-1}-1} \Big( \lim_{h \to \infty} \gamma(B_1,\CB_{h}) \Big) \Big( \lim_{h \to \infty} \gamma(B_2,\CB_{h})\Big).$$
If $B_1$ and $B_2$ are isomorphic, then we obtain 
$$\liminf_{h \to \infty} \gamma(B,\CB_h) = \limsup_{h \to \infty} \gamma(B,\CB_h) = \frac{\binom{k}{k/2}}{2^{k}-2} \Big( \lim_{h \to \infty} \gamma(B_1,\CB_{h}) \Big)^2$$
in the same way. This completes the proof.
\end{proof}

\begin{theorem}\label{thm:cater_pos}
Caterpillars always form a positive proportion of all trees induced by $k$ leaves in the limit. To be precise,
$$\liminf_{|T| \to \infty} \gamma(\C_k,T) = \frac{k!}{2} \prod_{j=1}^{k-1} (2^j-1)^{-1}.$$
In particular, the only binary trees with inducibility $1$ are caterpillars.
\end{theorem}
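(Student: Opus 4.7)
The plan is to establish $\liminf_{|T|\to\infty}\gamma(\C_k,T)\ge L_k$, with $L_k:=\tfrac{k!}{2}\prod_{j=1}^{k-1}(2^j-1)^{-1}$, by induction on $k$; the matching upper bound will come from $\lim_{h\to\infty}\gamma(\C_k,\CB_h)=L_k$, and the ``only caterpillars'' conclusion is then immediate because distinct $k$-leaf binary trees induce disjoint families of $k$-subsets, so $\gamma(B,T)+\gamma(\C_k,T)\le 1$ forces $i(B)\le 1-L_k<1$ for every other $k$-leaf tree $B$ (for $k\le 3$ there is no other $k$-leaf binary tree).

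The base cases $k\in\{2,3\}$ hold since $\gamma(\C_k,T)\equiv 1$. For the inductive step, split $T$ at its root into branches $T_1,T_2$ of sizes $a$ and $b=n-a$. A $k$-subset of leaves induces $\C_k$ iff all leaves lie in the same branch (inducing $\C_k$ there) or exactly one leaf sits in one branch while the other $k-1$ induce $\C_{k-1}$ in the other, so
$$c(\C_k,T)=c(\C_k,T_1)+c(\C_k,T_2)+a\cdot c(\C_{k-1},T_2)+b\cdot c(\C_{k-1},T_1).$$
Dividing by $\binom{n}{k}$, the coefficient sums in front of the $\C_k$- and $\C_{k-1}$-terms converge, as $n\to\infty$ with $a/n\to x$, to $\phi(x):=x^k+(1-x)^k$ and $\psi(x):=kx(1-x)^{k-1}+k(1-x)x^{k-1}$ respectively. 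Taking a sequence $T^{(j)}$ achieving $M:=\liminf\gamma(\C_k,T)$, splitting with sizes $a_j\le b_j$, and passing to a subsequence on which $a_j/n_j\to x^*\in[0,\tfrac12]$, the generic case $x^*\in(0,\tfrac12]$ (so $a_j,b_j\to\infty$) combines the induction hypothesis $\gamma(\C_{k-1},T_i^{(j)})\ge L_{k-1}-o(1)$ with $\gamma(\C_k,T_i^{(j)})\ge M-o(1)$ to yield, in the limit,
$$M\ge\phi(x^*)M+\psi(x^*)L_{k-1},\qquad\text{i.e.}\qquad M\ge\frac{\psi(x^*)L_{k-1}}{1-\phi(x^*)}.$$
Since $kL_{k-1}/(2^{k-1}-1)=L_k$, the whole argument reduces to the key inequality $\psi(x)/(1-\phi(x))\ge k/(2^{k-1}-1)$ for all $x\in(0,1)$, with equality at $x=\tfrac12$. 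The degenerate case $x^*=0$ is absorbed by iterating on the larger branch: when $a_j=o(n_j)$ the recursion gives $\gamma(\C_k,T^{(j)})=\gamma(\C_k,T_2^{(j)})+o(1)$, so the minimizing behavior transfers to the big branch, and either a subsequent split becomes generic (closing the argument) or one obtains a tree whose every internal split has a bounded small side, for which a direct count shows that a random $k$-subset lies in $k$ distinct such side-subtrees with probability $1-O(1/n)$ and hence induces $\C_k$, giving $\gamma(\C_k,T^{(j)})\to 1>L_k$, a contradiction.

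The main obstacle is the key inequality. Writing $A_j(x):=x^j(1-x)^{k-j}$, it rearranges to $\sum_{j=1}^{k-1}\binom{k}{j}A_j(x)\le(2^{k-1}-1)\bigl(A_1(x)+A_{k-1}(x)\bigr)$. The crucial observation is the factorization
$$A_1+A_{k-1}-A_j-A_{k-j}=x(1-x)\bigl((1-x)^{k-j-1}-x^{k-j-1}\bigr)\bigl((1-x)^{j-1}-x^{j-1}\bigr),\qquad 2\le j\le k-2,$$
whose two bracketed factors share the sign of $1-2x$, so that the right-hand side is non-negative. Hence $A_j+A_{k-j}\le A_1+A_{k-1}$; pairing $j$ with $k-j$ gives $2\sum_{j=2}^{k-2}\binom{k}{j}A_j\le(2^k-2k-2)(A_1+A_{k-1})$, and adding the contribution $k(A_1+A_{k-1})$ from $j\in\{1,k-1\}$ produces the bound with coefficient $2^{k-1}-1$. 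For the matching upper bound, Theorem~\ref{thm:complete} applied to $\C_k$, whose branches $\C_1$ and $\C_{k-1}$ are distinct for $k\ge 3$, gives $\lim_h\gamma(\C_k,\CB_h)=\tfrac{k}{2^{k-1}-1}\lim_h\gamma(\C_{k-1},\CB_h)$, which telescopes with the base $\lim_h\gamma(\C_2,\CB_h)=1$ into $\lim_h\gamma(\C_k,\CB_h)=L_k$, matching the inductive lower bound.
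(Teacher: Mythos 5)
Your root decomposition, the recursion $c(\C_k,T)=c(\C_k,T_1)+c(\C_k,T_2)+a\,c(\C_{k-1},T_2)+b\,c(\C_{k-1},T_1)$, the key inequality, and the sharpness computation are all the same as the paper's: your factorization $A_1+A_{k-1}-A_j-A_{k-j}=x(1-x)\bigl((1-x)^{j-1}-x^{j-1}\bigr)\bigl((1-x)^{k-j-1}-x^{k-j-1}\bigr)$ with the pairing over $j$ and $k-j$ is precisely the paper's Lemma~\ref{lem:f_max}, your telescoped limit on $\CB_h$ agrees with the paper's closed form $c(\C_k,\CB_h)=2^{h-1}\prod_{j=1}^{k-1}\frac{2^h-2^{j-1}}{2^j-1}$, and the deduction that non-caterpillars have inducibility at most $1-L_k<1$ is fine. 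The genuine gap is in how you package the lower bound: you run a compactness argument on a minimizing sequence, and your treatment of the degenerate case $x^*=0$ does not go through. The dichotomy ``either a subsequent split becomes generic, or every internal split has a bounded small side'' is not exhaustive. Take $T^{(j)}$ to be a spine whose top $\tfrac12\sqrt{n_j}$ internal vertices each carry a side tree of size $\sqrt{n_j}$, with a complete binary tree on the remaining $\sim n_j/2$ leaves at the bottom. Then every split at bounded depth has ratio $n_j^{-1/2}\to 0$, so no subsequence makes any fixed-depth split generic; the small sides have size $\sqrt{n_j}\to\infty$, so the bounded-small-side alternative fails as well; and the first generic split sits at depth $\Theta(\sqrt{n_j})$. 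Iterating your reduction that far is not an $o(1)$ perturbation: each descent multiplies the leading coefficient by roughly $(1-k/\sqrt{n_j})$, and after $\Theta(\sqrt{n_j})$ steps the accumulated loss is a constant factor of order $e^{-k/2}$, not $o(1)$. One can stop descending once the current size drops below $(1-\epsilon)n_j$, keeping the loss $O(k\epsilon)$, but the terminal configuration---many $o(n)$ side trees carrying a constant fraction of the leaves above one large bottom subtree---then still needs an argument combining caterpillar counts in the bottom subtree with the side leaves, which is essentially the induction you were trying to bypass.

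The paper avoids all of this with a quantitative formulation: it proves, by simultaneous induction on $k$ and $n$, the pointwise bound $c(\C_k,T)\ge a_k n^k-b_k n^{k-1}$ for \emph{every} binary tree $T$ with $n$ leaves, where $b_k$ is chosen so that $b_k\ge b_{k-1}\sup_{0<x<1}\frac{x(1-x)\bigl(x^{k-3}+(1-x)^{k-3}\bigr)}{1-x^{k-1}-(1-x)^{k-1}}$ (the supremum is finite and independent of $n$). Lemma~\ref{lem:f_max} then controls the leading coefficient uniformly for every split ratio $\alpha\in(0,1)$---no distinction between generic and degenerate splits is ever needed---and the subleading term $b_k n^{k-1}$ absorbs all finite-size effects, making the base cases trivial. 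Since you already have the key inequality in exactly the form the paper uses, the repair is mechanical: replace the minimizing-sequence argument by this two-variable induction with an explicit $O(n^{k-1})$ error term, and the rest of your write-up (sharpness on $\CB_h$ and the corollary about caterpillars) stands as is.
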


This is obtained along the same lines as Proposition~\ref{prop:cb2} and Theorem~\ref{thm:even}. In fact, in the special case $k=4$ they are equivalent: since there are only two different binary trees with four leaves (the caterpillar $\C_4$ and the even tree $\E_4 = \CB_2$), we have
$$\liminf_{|T| \to \infty} \gamma(\C_4,T) = 1 - \limsup_{|T| \to \infty} \gamma(\E_4,T).$$
Before we proceed to the proof, let us first prove a technical lemma similar to Lemma~\ref{lem:f_max1} that will be required in the proof of Theorem~\ref{thm:cater_pos}.

\begin{lemma}\label{lem:f_max}
For every integer $k\geq 2$, the function
$$f(x) = \frac{x(1-x)\big(x^{k-2}+(1-x)^{k-2}\big)}{1-x^k-(1-x)^k}$$
on the interval $(0,1)$ has its minimum at $x = \frac12$, i.e.
$$f(x) \geq f \Big( \frac12 \Big) = \frac{1}{2^{k-1}-1}$$
for all $x \in (0,1)$.
\end{lemma}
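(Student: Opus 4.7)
The approach parallels the argument for Lemma~\ref{lem:f_max1}: invert $f$ and prove the equivalent upper bound $1/f(x) \leq 2^{k-1}-1$ by a pairing argument on the binomial expansion of the numerator. The binomial theorem gives $1 - x^k - (1-x)^k = \sum_{j=1}^{k-1} \binom{k}{j} x^j(1-x)^{k-j}$, so
\begin{equation*}
\frac{1}{f(x)} = \frac{\sum_{j=1}^{k-1} \binom{k}{j} x^j(1-x)^{k-j}}{x^{k-1}(1-x) + x(1-x)^{k-1}}.
\end{equation*}
The denominator is exactly the sum of the $j=1$ and $j=k-1$ terms of the expansion (with the binomial factor $k$ stripped away), which suggests that the right strategy is to bound every intermediate term against these two boundary terms.

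The key step I plan to carry out is the pointwise inequality
\begin{equation*}
x^j(1-x)^{k-j} + x^{k-j}(1-x)^j \leq x^{k-1}(1-x) + x(1-x)^{k-1}
\end{equation*}
for each $1 \leq j \leq k-1$. After dividing both sides by $x(1-x)$ and writing $a=x$, $b=1-x$, $m=j-1$, this rearranges to the algebraic identity
\begin{equation*}
(a^{k-2}+b^{k-2}) - \bigl(a^m b^{k-m-2} + a^{k-m-2} b^m\bigr) = (a^m - b^m)(a^{k-m-2} - b^{k-m-2}),
\end{equation*}
whose right-hand side is nonnegative because both factors $a^t-b^t$ carry the sign of $a-b$ (or vanish). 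Equality occurs precisely when $j \in \{1, k-1\}$ or $x = 1/2$.

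To finish, I would multiply the pointwise inequality by $\binom{k}{j}$ and sum over $j = 1, \ldots, k-1$. Using $\binom{k}{j} = \binom{k}{k-j}$, the left-hand side collapses to $2 \sum_{j=1}^{k-1} \binom{k}{j} x^j(1-x)^{k-j}$, while the right-hand side equals $(2^k-2)(x^{k-1}(1-x) + x(1-x)^{k-1})$ since $\sum_{j=1}^{k-1}\binom{k}{j} = 2^k - 2$. Dividing through yields $1/f(x) \leq 2^{k-1} - 1$, with equality at $x = 1/2$; a brief check confirms that this is the only equality point whenever $k \geq 4$, so that $x = 1/2$ is the unique minimum. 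The only genuine obstacle is spotting the factorization $(a^m - b^m)(a^{k-m-2} - b^{k-m-2})$; unlike Lemma~\ref{lem:f_max1}, AM--GM points the wrong way here, so a different algebraic identity is needed. Once this identity is in hand, the rest is bookkeeping of the same flavour as the previous lemma.
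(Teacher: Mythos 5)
Your proposal is correct and takes essentially the same route as the paper: the paper likewise inverts $f$, expands $1-x^k-(1-x)^k$ by the binomial theorem, and bounds each paired term against the boundary terms via the same factorization $\bigl(x^{j-1}-(1-x)^{j-1}\bigr)\bigl(x^{k-j-1}-(1-x)^{k-j-1}\bigr)\geq 0$, which is exactly your identity with $a=x$, $b=1-x$, $m=j-1$. The only differences are cosmetic --- your explicit symmetric summation in place of the paper's one-line $\frac{1}{f(x)}\leq \frac12\sum_{j=1}^{k-1}\binom{k}{j}=2^{k-1}-1$, plus your discussion of equality cases, which the lemma does not require.
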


\begin{proof}
By the binomial theorem, we have
$$\frac{1}{f(x)} = \frac{\sum_{j=1}^{k-1} \binom{k}{j} x^j(1-x)^{k-j}}{x(1-x)^{k-1} + (1-x)x^{k-1}}.$$
Now we group the terms pairwise as in the proof of Lemma~\ref{lem:f_max1}: for $j\in \{1,2,\ldots,k-1\}$, we have
$$x^j(1-x)^{k-j} + x^{k-j}(1-x)^{j} \leq x(1-x)^{k-1} + (1-x)x^{k-1}$$
for all $x \in (0,1)$, since this is easily seen to be equivalent to
$$\big(x^{j-1}-(1-x)^{j-1}\big)\big(x^{k-j-1}-(1-x)^{k-j-1}\big) \geq 0.$$
It follows that
$$\frac{1}{f(x)} \leq \frac12 \sum_{j=1}^{k-1} \binom{k}{j} = 2^{k-1}-1,$$
which proves the lemma.
\end{proof}

\begin{proof}[Proof of Theorem~\ref{thm:cater_pos}]
 Set $a_k = 
\frac{1}{2} \prod_{j=1}^{k-1} (2^j-1)^{-1}$, and let us prove that for every  $k \geq 3$, there exists a constant $b_k$ such that
$$c(\C_k,T) \geq a_k n^k - b_k n^{k-1}$$
for every binary tree $T$ with $n$ leaves.

This is done by simultaneous induction on $k$ and $n$. For $k = 3$, the statement is trivial since $c(\C_k,T) = \binom{n}{3}$, and so it is for $n=1$. For the induction step, consider the two branches $T_1$ and $T_2$ of $T$. An induced $\C_k$ either consists of leaves in only one of the branches, or it has exactly one leaf in either $T_1$ or $T_2$. The leaves in the other branch have to induce a caterpillar $\C_{k-1}$ in the latter case. Suppose that $T_1$ and $T_2$ have $\alpha n$ and $(1-\alpha)n$ leaves respectively ($0 < \alpha < 1$).
\begin{equation}\label{eq:rec}
c(\C_k,T) = c(\C_k,T_1) + c(\C_k,T_2) + \alpha n c(\C_{k-1},T_2) + (1-\alpha)n  c(\C_{k-1},T_1).
\end{equation}
Using the induction hypothesis, we obtain
\begin{align*}
c(\C_k,T) &\geq a_k (\alpha n)^k - b_k (\alpha n)^{k-1} + a_k \big((1-\alpha) n\big)^k - b_k \big((1-\alpha) n\big)^{k-1} \\
&\quad + \alpha n \Big( a_{k-1} \big((1-\alpha)n\big)^{k-1} - b_{k-1} \big((1-\alpha)n\big)^{k-2} \Big) \\
&\quad + (1-\alpha) n  \Big( a_{k-1} (\alpha n)^{k-1} - b_{k-1} (\alpha n)^{k-2} \Big) \\
&= \Big( a_k\big(\alpha^k + (1-\alpha)^k\big) + a_{k-1}\big(\alpha(1-\alpha)^{k-1}+(1-\alpha)\alpha^{k-1}\big) \Big) n^k \\
&\quad - \Big( b_k\big(\alpha^{k-1} + (1-\alpha)^{k-1}\big) + b_{k-1}\big(\alpha(1-\alpha)^{k-2}+(1-\alpha)\alpha^{k-2}\big) \Big) n^{k-1}.
\end{align*}
Since the function
$$f(x) = \frac{x(1-x)\big(x^{k-2}+(1-x)^{k-2}\big)}{1-x^k-(1-x)^k}$$
has its minimum at $x = \frac12$ by Lemma~\ref{lem:f_max}, and the value there is $f(\frac12) = \frac{1}{2^{k-1}-1}$, we have
$$a_k = \frac{1}{2^{k-1}-1} a_{k-1} \leq \frac{\alpha(1-\alpha)\big(\alpha^{k-2}+(1-\alpha)^{k-2}\big)}{1-\alpha^k-(1-\alpha)^k}a_{k-1}$$
and consequently
$$\Big( a_k\big(\alpha^k + (1-\alpha)^k\big) + a_{k-1}\big(\alpha(1-\alpha)^{k-1}+(1-\alpha)\alpha^{k-1}\big) \Big) n^k \geq a_k n^k.$$
We choose the constant $b_k$ is in such a way that
$$b_k \geq b_{k-1} \cdot \sup_{0 < x  < 1}  \frac{x(1-x)\big(x^{k-3}+(1-x)^{k-3}\big)}{1-x^{k-1}-(1-x)^{k-1}}.$$
The supremum is actually finite (and independent of $n$, which is crucial!), since the limits at $0$ and $1$ both are. It then follows that
$$\Big( b_k\big(\alpha^{k-1} + (1-\alpha)^{k-1}\big) + b_{k-1}\big(\alpha(1-\alpha)^{k-2}+(1-\alpha)\alpha^{k-2}\big) \Big) n^{k-1} \leq b_k n^{k-1},$$
which completes the induction step. Thus we end up with
$$\liminf_{|T| \to \infty} \gamma(\C_k,T) \geq \lim_{n \to \infty} \frac{a_kn^k-b_kn^{k-1}}{\binom{n}{k}} = k!a_k = \frac{k!}{2} \prod_{j=1}^{k-1} (2^j-1)^{-1}.$$

\medskip

To complete the proof, we also need to show that this limit can actually be attained. To this end, one can consider complete binary trees. From the recursion
$$c(\C_k,\CB_h) = 2c(\C_k,\CB_{h-1}) + 2^h c(\C_{k-1},\CB_{h-1})$$
that follows from~\eqref{eq:cbh} upon specialisation (see also the proof of Theorem~\ref{thm:complete}), we obtain
$$c(\C_k,\CB_h) = 2^{h-1} \prod _{j=1}^{k-1} \frac{2^h-2^{j-1}}{2^j-1}$$
by another simple induction, starting from the base case $h=1$. Hence we have
$$\lim_{h \to \infty} \gamma(\C_k,\CB_h) = \lim_{h \to \infty} \frac{2^{h-1}}{\binom{2^h}{k}}\prod _{j=1}^{k-1} \frac{2^h-2^{j-1}}{2^j-1} = \frac{k!}{2} \prod_{j=1}^{k-1} (2^j-1)^{-1},$$
completing our proof.
\end{proof}

We conclude this section with two open problems. First, we notice that the inducibility is a rational number in all our examples. It is thus natural to ask whether this is always the case.

\begin{question}
Is the inducibility of a binary tree always a rational number?
\end{question}

As a second point, it seems (experimentally) that the maximum of $\gamma(B,T)$ over trees of given order $n$ is usually close to the eventual limit superior $i(B)$. Specifically, we conjecture the following:

\begin{conjecture}
For every binary tree $B$, we have
$$\max_{|T| = n} \gamma(B,T) = i(B) + O(n^{-1}).$$
\end{conjecture}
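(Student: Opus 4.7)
The plan is to prove both halves of the conjecture simultaneously by induction on $k = |B|$, following the template of Proposition~\ref{prop:cb2}, Theorem~\ref{thm:even} and Theorem~\ref{thm:cater_pos}. Writing $A_B = i(B)/k!$, the conjecture amounts to finding a constant $C_B$ with $c(B,T) \le A_B n^k + C_B n^{k-1}$ for every tree $T$ with $n$ leaves, together with a matching construction from below.

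For the upper bound, let $B$ have branches $B_1, B_2$ of sizes $k_1, k_2$ and let $T$ have branches $T_1, T_2$ of sizes $m$ and $n-m$. The identity
$$c(B,T) = c(B,T_1) + c(B,T_2) + c(B_1,T_1)c(B_2,T_2) + c(B_2,T_1)c(B_1,T_2)$$
(with the last two terms replaced by $c(B_1,T_1)c(B_1,T_2)$ when $B_1\cong B_2$), combined with the outer induction bound $c(B_i,T_j) \le A_{B_i}|T_j|^{k_i} + O(|T_j|^{k_i-1})$ and an inner induction on $n$, reduces the inductive step to the leading-order inequality
$$A_B\bigl(n^k - m^k - (n-m)^k\bigr) \ge A_{B_1}A_{B_2}\bigl(m^{k_1}(n-m)^{k_2}+m^{k_2}(n-m)^{k_1}\bigr)$$
(with the right-hand side halved in the symmetric case) up to an $O(n^{k-1})$ error. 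Substituting $x = m/n$ pins down the natural candidate
$$A_B = \sup_{x\in(0,1)} \frac{A_{B_1}A_{B_2}\bigl(x^{k_1}(1-x)^{k_2}+x^{k_2}(1-x)^{k_1}\bigr)}{1-x^k-(1-x)^k},$$
and the $O(n^{k-1})$ slack can then be absorbed into $C_B n^{k-1}$ by an argument analogous to the one in the proof of Theorem~\ref{thm:cater_pos}, using that the above supremum has finite limits at $x = 0, 1$. This would give $i(B) \le k! A_B$.

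For the matching lower bound, one would recursively construct a family $T^B_n$ by choosing a maximiser $x^*_B$ of the ratio above, splitting $n$ into $\lfloor x^*_B n\rfloor, \lceil(1-x^*_B)n\rceil$, and using appropriate near-extremal subtrees as branches; unwinding the recursion should yield $\gamma(B, T^B_n) \ge k!A_B - O(n^{-1})$, which combined with the upper bound would force $i(B) = k! A_B$ and give the conjecture. The degenerate cases $x^*_B \in \{0,1\}$, which arise already for caterpillars, need a separate construction in the spirit of the extremal caterpillar examples.

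The main obstacle is choosing the branches in this lower-bound construction. The obvious candidates — the extremal trees $T^{B_1}_m, T^{B_2}_{n-m}$ for the branches of $B$ — maximise the direct cross contribution but leave the $c(B,T_i)$ terms small, so they only achieve the leading coefficient $A_{B_1}A_{B_2}\bigl((x^*_B)^{k_1}(1-x^*_B)^{k_2}+\cdots\bigr)$ instead of $A_B$. To recover the full $A_B$ one needs branches that are themselves near-extremal for $B$ \emph{and} still achieve near-extremality for $B_1$ and $B_2$; in the cases of Lemmas~\ref{lem:f_max1} and~\ref{lem:f_max} this holds automatically because $x^*_B = 1/2$ and the same recursive tree family serves all three roles, but for a general $B$ there is no reason this should hold, and $x^*_B$ itself has no known closed form (it might even be irrational, which would incidentally resolve the preceding rationality question negatively). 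Settling the conjecture in this generality is therefore likely to require a stability result of flag-algebra type, in the spirit of~\cite{Hatami}, rather than the elementary inductions that sufficed for the special cases in this paper.
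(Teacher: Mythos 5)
This statement is one of the paper's open conjectures: the paper offers no proof of it, so there is nothing for your attempt to be measured against except its own internal soundness --- and, as you yourself concede in the final paragraph, it does not close. Concretely, the fatal step is the identification $i(B) = k!\,A_B$ with the recursively defined supremum. Your upper-bound induction is mechanically fine and does yield $c(B,T) \le A_B n^k + C_B n^{k-1}$, hence $\max_{|T|=n}\gamma(B,T) \le k!\,A_B + O(n^{-1})$, by exactly the error-absorption device of Theorem~\ref{thm:cater_pos} (the only error coefficient that could blow up as $x \to 0$ or $x \to 1$ is the one attached to a branch with $k_i = 1$, where the corresponding $C_{B_i}$ vanishes since $c(B_i,T_j)=|T_j|$ exactly). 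But this bounds the maximum by $k!\,A_B$, which is only known to satisfy $k!\,A_B \ge i(B)$; equality is precisely what your lower-bound construction was supposed to certify, and there the self-consistency obstruction you identify is genuine: the branches of a near-extremal tree for $B$ would have to be simultaneously near-extremal for $B$, $B_1$ and $B_2$ at the ratio $x^*_B$. In the cases the paper settles (Proposition~\ref{prop:cb2}, Theorem~\ref{thm:even}, Theorem~\ref{thm:cater_pos}) this holds only because $x^* = \frac12$ and one family --- even trees, respectively complete binary trees --- is extremal for every subtree of $B$ at once, courtesy of Lemmas~\ref{lem:f_max1} and~\ref{lem:f_max}; for general $B$ the recursion may strictly overshoot, so your argument establishes the conjecture only conditionally on an unproved factorization of the inducibility. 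Note also a small inconsistency: you set $A_B = i(B)/k!$ at the outset and later \emph{define} $A_B$ by the supremum recursion; these are two different quantities until the gap is closed.

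There is, however, one genuine simplification you missed, which makes half of your programme unnecessary. Deleting a leaf $\ell$ of $T$ and suppressing the resulting degree-$2$ vertex does not change the tree induced by any $k$-set of remaining leaves, whence $(n-k)\,c(B,T) = \sum_{\ell} c(B,T-\ell)$ and therefore $\gamma(B,T) = \frac{1}{n}\sum_{\ell}\gamma(B,T-\ell)$. Thus $\max_{|T|=n}\gamma(B,T)$ is non-increasing in $n$ (for $n > k$), so it converges, the $\limsup$ in the definition of $i(B)$ is a genuine limit, and $\max_{|T|=n}\gamma(B,T) \ge i(B)$ holds for \emph{every} $n$ with no error term at all. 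Consequently no matching construction from below is needed --- degenerate maximizers $x^*_B \in \{0,1\}$ included --- and the entire content of the conjecture is the upper rate: $c(B,T) \le \frac{i(B)}{k!}\,n^k + O(n^{k-1})$ uniformly in $T$. Your method delivers this only with $i(B)$ replaced by the unverified $k!\,A_B$, and your closing diagnosis --- that bridging this gap likely requires a stability or limit-object argument rather than the paper's elementary inductions --- seems to me an accurate assessment of why the authors left the statement as a conjecture.
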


\section{An application: crossing numbers of  random tanglegrams} \label{tangle}

A tanglegram consists of two binary trees with the same number of leaves and a perfect matching between the leaves of the two trees. Tanglegrams play a major role in phylogenetics, especially in the theory of cospeciation. The first binary tree is the phylogenetic tree of hosts, while the second binary tree is the phylogenetic tree of their parasites, e.g. gopher and louse 
\cite{HafnerNadler}.

There are generally many different ways to draw a given tanglegram (see Figure~\ref{fig:two_drawings} for two different drawings of the same tanglegram). Note that edges that belong to the perfect matchings are allowed to cross, while this is not permitted for tree edges. Is is desirable to draw a tanglegram with the least possible number of crossings, which is known as the Tanglegram Layout Problem \cite{St.John}.  The least possible number of crossings  is related to the  number of times  parasites switched hosts \cite{HafnerNadler},
or, working with gene trees instead of phylogenetic trees, to the 
number of horizontal gene transfers (\cite{Burt}, pp. 204--206).
Since the  Tanglegram Layout Problem  is known to be NP-hard, information about the order of magnitude of the minimum number of crossings can be useful.

In the following, we show that a random tanglegram of size $n$ (i.e., both trees have $n$ leaves) has a crossing number of order $\Theta(n^2)$, both on average and with high probability. Since it is trivial that the number of crossings of any tanglegram drawing is at most $\binom{n}{2}$, this shows that most tanglegrams actually require ``many'' crossings.
The proof uses the ``counting method"  of crossing number theory \cite{Zarank}, based on the fact that many induced tanglegrams of order 4 must have a crossing.
We do not shoot for the best constant as constants in the most basic crossing number problems are not known either \cite{Zarank}.

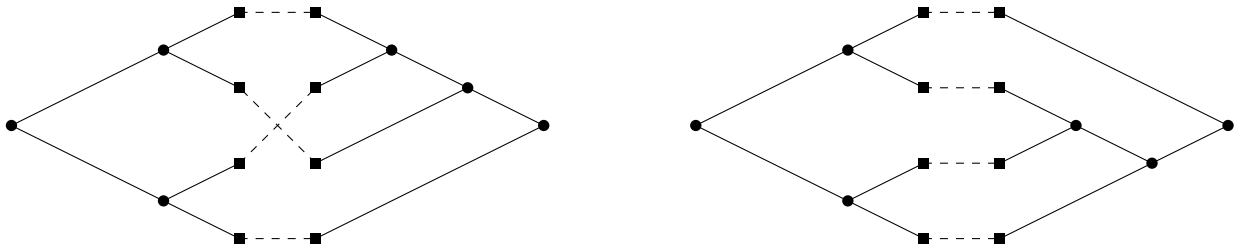
\begin{figure}[htbp]
\begin{center}
\begin{tikzpicture}
        \node[fill=black,circle,inner sep=1.5pt]  at (-1,0) {};
        \node[fill=black,circle,inner sep=1.5pt]  at (1,-1) {};
        \node[fill=black,circle,inner sep=1.5pt]  at (1,1) {};
        \node[fill=black,rectangle,inner sep=2pt]  at (2,-1.5) {};
        \node[fill=black,rectangle,inner sep=2pt]  at (2,-.5) {};
        \node[fill=black,rectangle,inner sep=2pt]  at (2,.5) {};
        \node[fill=black,rectangle,inner sep=2pt]  at (2,1.5) {};

	\draw (-1,0)--(2,1.5);
	\draw (-1,0)--(2,-1.5);
	\draw (1,-1)--(2,-.5);
	\draw (1,1)--(2,.5);

        \node[fill=black,rectangle,inner sep=2pt]  at (3,-1.5) {};
        \node[fill=black,rectangle,inner sep=2pt]  at (3,-.5) {};
        \node[fill=black,rectangle,inner sep=2pt]  at (3,.5) {};
        \node[fill=black,rectangle,inner sep=2pt]  at (3,1.5) {};
        \node[fill=black,circle,inner sep=1.5pt]  at (4,1) {};
        \node[fill=black,circle,inner sep=1.5pt]  at (5,.5) {};
        \node[fill=black,circle,inner sep=1.5pt]  at (6,0) {};

	\draw (6,0)--(3,1.5);
	\draw (6,0)--(3,-1.5);
	\draw (5,.5)--(3,-.5);
	\draw (4,1)--(3,.5);

	\draw [dashed] (2,-1.5)--(3,-1.5);
	\draw [dashed] (2,-.5)--(3,.5);
	\draw [dashed] (3,-.5)--(2,.5);
	\draw [dashed] (2,1.5)--(3,1.5);

        \node[fill=black,circle,inner sep=1.5pt]  at (8,0) {};
        \node[fill=black,circle,inner sep=1.5pt]  at (10,-1) {};
        \node[fill=black,circle,inner sep=1.5pt]  at (10,1) {};
        \node[fill=black,rectangle,inner sep=2pt]  at (11,-1.5) {};
        \node[fill=black,rectangle,inner sep=2pt]  at (11,-.5) {};
        \node[fill=black,rectangle,inner sep=2pt]  at (11,.5) {};
        \node[fill=black,rectangle,inner sep=2pt]  at (11,1.5) {};

	\draw (8,0)--(11,1.5);
	\draw (8,0)--(11,-1.5);
	\draw (10,-1)--(11,-.5);
	\draw (10,1)--(11,.5);

        \node[fill=black,rectangle,inner sep=2pt]  at (12,-1.5) {};
        \node[fill=black,rectangle,inner sep=2pt]  at (12,-.5) {};
        \node[fill=black,rectangle,inner sep=2pt]  at (12,.5) {};
        \node[fill=black,rectangle,inner sep=2pt]  at (12,1.5) {};
        \node[fill=black,circle,inner sep=1.5pt]  at (13,0) {};
        \node[fill=black,circle,inner sep=1.5pt]  at (14,-.5) {};
        \node[fill=black,circle,inner sep=1.5pt]  at (15,0) {};

	\draw (15,0)--(12,1.5);
	\draw (15,0)--(12,-1.5);
	\draw (14,-.5)--(12,.5);
	\draw (13,0)--(12,-.5);

	\draw [dashed] (11,-1.5)--(12,-1.5);
	\draw [dashed] (11,-.5)--(12,-.5);
	\draw [dashed] (11,.5)--(12,.5);
	\draw [dashed] (11,1.5)--(12,1.5);

\end{tikzpicture}
\end{center}
\caption{Two layouts of a tanglegram.}\label{fig:two_drawings}
\end{figure}

Billey, Konvalinka, and Matsen \cite{billey} enumerated tanglegrams, and  we use their asymptotic formula for the  number of tanglegrams. They also asked a number of questions about the shape of random tanglegrams that were answered in \cite{KonvWag} by means of a strong structure theorem.

Let us give some formal definitions. A {\em plane binary tree} has one distinguished vertex
assumed to be a common ancestor of all other vertices, and each vertex either has two children
(left and right) or no children. A vertex with no children is a  {\em leaf}, and a vertex with two
children is an {\em internal vertex}. It is well known that the number of plane binary trees with $n$
leaves is the Catalan number $C_n=\frac{1}{n} {2n-2\choose n-1}$. 

An automorphism of a plane binary tree $B$ consists of switching the left and right subtrees in some internal vertices in such a way that $B$ does not change.
The automorphisms of a  plane binary tree $B$ form a group with respect to composition,  the  automorphism group of $B$, denoted by $A(B)$. Given a left plane binary tree $B_1$ and  a right plane binary tree $B_2$, both with $n$ leaves, such that the leaf sets are on two parallel vertical lines,
and a matching  $\sigma$ between their leaf sets drawn in straight line segments
(see Fig. \ref{fig:two_drawings}) is a {\em tanglegram layout}. The crossing number of a layout is the number of crossing pairs of matching edges.
Two layouts represent the same tanglegram if a sequence of switches in internal vertices, and after that a continuous deformation, can move one into the other. (The edges of the matching move with the leaves that they connect. Switching the left and right tree is not allowed.)
The {\em (tangle) crossing number}  $\Crt(T)$ of a tanglegram $T$  is the minimum number of crossings among its layouts.
Let ${\T}_n$ be the set of all tanglegrams of size $n$, and let $t_n$ be the number of elements in the
set  $\T_n$.  Assume that $(B_1,B_2, \sigma)$ is a layout of the tanglegram $T$.
Let $A(T)$ denote the automorphism group of the tanglegram $T$. $A(T)$ can be viewed as either a subgroup of $A(B_1)$ or a subgroup of $A(B_2)$, and we also
write $A(T)=A(B_1,B_2,\sigma)$. See \cite{Matsen} for more information on automorphisms of tanglegrams.

\begin{figure}[h!]
\begin{center}
\begin{tikzpicture}[scale = 0.4]
\newcommand{\treeb}[3]{\coordinate (v1) at (#1,#3); \coordinate (v2) at (#1+#2,#3+0.5);\coordinate (v3) at (#1+2*#2,#3+1);\coordinate (v4) at (#1+3*#2,#3+1.5);\coordinate (v5) at (#1+3*#2,#3+0.5);\coordinate (v6) at (#1+3*#2,#3-0.5);\coordinate (v7) at (#1+3*#2,#3-1.5);\draw[fill] (v1) circle (.5ex);\draw[fill] (v4) circle (.5ex);\draw[fill] (v5) circle (.5ex);\draw[fill] (v6) circle (.5ex);\draw[fill] (v7) circle (.5ex);\draw (v1) -- (v4);\draw (v3) -- (v5);\draw (v2) -- (v6);\draw (v1) -- (v7);}
\newcommand{\treec}[3]{\coordinate (v1) at (#1,#3); \coordinate (v2) at (#1+2*#2,#3-1);\coordinate (v3) at (#1+2*#2,#3+1);\coordinate (v4) at (#1+3*#2,#3+1.5);\coordinate (v5) at (#1+3*#2,#3+0.5);\coordinate (v6) at (#1+3*#2,#3-0.5);\coordinate (v7) at (#1+3*#2,#3-1.5);\draw[fill] (v1) circle (.5ex);\draw[fill] (v4) circle (.5ex);\draw[fill] (v5) circle (.5ex);\draw[fill] (v6) circle (.5ex);\draw[fill] (v7) circle (.5ex);\draw (v1) -- (v4);\draw (v3) -- (v5);\draw (v2) -- (v6);\draw (v1) -- (v7);}
\newcommand{\tangleb}[7]{\treeb{#1}{#2}{#7} \treeb{#1+8*#2}{-#2}{#7} \draw[dashed] (#1 + 3*#2,#7+1.5) -- (#1 + 5*#2,#7+2.5-#3); \draw[dashed] (#1 + 3*#2,#7+0.5) -- (#1 + 5*#2,#7+2.5-#4); \draw[dashed] (#1 + 3*#2,#7-0.5) -- (#1 + 5*#2,#7+2.5-#5); \draw[dashed] (#1 + 3*#2,#7-1.5) -- (#1 + 5*#2,#7+2.5-#6);}
\newcommand{\tanglec}[7]{\treeb{#1}{#2}{#7} \treec{#1+8*#2}{-#2}{#7} \draw[dashed] (#1 + 3*#2,#7+1.5) -- (#1 + 5*#2,#7+2.5-#3); \draw[dashed] (#1 + 3*#2,#7+0.5) -- (#1 + 5*#2,#7+2.5-#4); \draw[dashed] (#1 + 3*#2,#7-0.5) -- (#1 + 5*#2,#7+2.5-#5); \draw[dashed] (#1 + 3*#2,#7-1.5) -- (#1 + 5*#2,#7+2.5-#6);}
\newcommand{\tangled}[7]{\treec{#1}{#2}{#7} \treeb{#1+8*#2}{-#2}{#7} \draw[dashed] (#1 + 3*#2,#7+1.5) -- (#1 + 5*#2,#7+2.5-#3); \draw[dashed] (#1 + 3*#2,#7+0.5) -- (#1 + 5*#2,#7+2.5-#4); \draw[dashed] (#1 + 3*#2,#7-0.5) -- (#1 + 5*#2,#7+2.5-#5); \draw[dashed] (#1 + 3*#2,#7-1.5) -- (#1 + 5*#2,#7+2.5-#6);}
\newcommand{\tanglee}[7]{\treec{#1}{#2}{#7} \treec{#1+8*#2}{-#2}{#7} \draw[dashed] (#1 + 3*#2,#7+1.5) -- (#1 + 5*#2,#7+2.5-#3); \draw[dashed] (#1 + 3*#2,#7+0.5) -- (#1 + 5*#2,#7+2.5-#4); \draw[dashed] (#1 + 3*#2,#7-0.5) -- (#1 + 5*#2,#7+2.5-#5); \draw[dashed] (#1 + 3*#2,#7-1.5) -- (#1 + 5*#2,#7+2.5-#6);}
\tangleb {0} {0.7} 1 2 3 4 {0}
\tangleb {7} {0.7} 1 2 4 3 {0}
\tangleb {14} {0.7} 1 3 2 4 {0}
\tangleb {21} {0.7} 1 3 4 2 {0}
\tangleb {28} {0.7} 1 4 2 3 {0}
\tangleb {0} {0.7} 1 4 3 2 {-5}
\tangleb {7} {0.7} 3 4 1 2 {-5}
\tanglec {14} {0.7} 1 2 3 4 {-5}
\tanglec {21} {0.7} 1 3 2 4 {-5}
\tangled {28} {0.7} 1 2 3 4 {-5}
\tangled {7} {0.7} 1 3 2 4 {-10}
\tanglee {14} {0.7} 1 2 3 4 {-10}
\tanglee {21} {0.7} 1 3 2 4 {-10}
\node at (3,-2.5){$_{\text{No.\ } 1}$};
\node at (10,-2.5){$_{\text{No.\ } 2}$};
\node at (17,-2.5){$_{\text{No.\ } 3}$};
\node at (24,-2.5){$_{\text{No.\ } 4}$};
\node at (31,-2.5){$_{\text{No.\ } 5}$};
\node at (3,-7.5){$_{\text{No.\ } 6}$};
\node at (10,-7.5){$_{\text{No.\ } 7}$};
\node at (17,-7.5){$_{\text{No.\ } 8}$};
\node at (24,-7.5){$_{\text{No.\ } 9}$};
\node at (31,-7.5){$_{\text{No.\ } 10}$};
\node at (10,-12.5){$_{\text{No.\ } 11}$};
\node at (17,-12.5){$_{\text{No.\ } 12}$};
\node at (24,-12.5){$_{\text{No.\ } 13}$};
\end{tikzpicture}
\end{center}
\caption{The $13$ tanglegrams of size $4$ from \cite{KonvWag}.}\label{fig:t4}
\end{figure}

Note that the (ordinary) crossing number $\Cr(T)$ of the graph $T$ can be smaller then $\Crt(T)$, since for the ordinary crossing number we can draw the vertices in any positions, and the edges can be drawn curves.

\begin{lemma}\label{lem:crt} Let $T=(B_1,B_2,\sigma)$ be a tanglegram, where the root vertex of $B_i$ is $r_i$. Then $\Crt(T)\ge \Cr(T^*)$, where $T^*$ is the graph obtained from $T$ by adding the edge $r_1r_2$. 
In particular, if $T_1$ and $T_2$ are the tanglegrams No. $6$ and $13$ in Figure \ref{fig:t4}, then $\Cr(T_1)=\Cr(T_2)=0$ while
$\Crt(T_1)=\Crt(T_2)=1$.
\end{lemma}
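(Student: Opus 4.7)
The plan is to prove the general inequality first, and then to deduce the claims about the two specific tanglegrams as a corollary.

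For the general inequality $\Crt(T) \ge \Cr(T^*)$: start from any tanglegram layout of $T=(B_1,B_2,\sigma)$ that achieves $\Crt(T)$ crossings. By the definition of a layout, $B_1$ is drawn in the closed left half-plane with its leaves on a vertical line and its root $r_1$ to the far left, while $B_2$ is drawn symmetrically in the right half-plane with $r_2$ to the far right; the only edges that may cross are the straight matching segments between the two leaf lines. Both $r_1$ and $r_2$ lie on the boundary of the unbounded face, so we can route a single additional arc from $r_1$ to $r_2$ through that face, going around the entire figure, without introducing any new crossing. This produces a drawing of the graph $T^*$ with exactly $\Crt(T)$ crossings, which gives $\Cr(T^*) \le \Crt(T)$.

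For the two specific tanglegrams $T_1$ (No.~6) and $T_2$ (No.~13): the upper bound $\Crt(T_i) \le 1$ is witnessed directly by the layouts in Figure~\ref{fig:t4}, each of which is drawn with exactly one crossing. For the matching lower bound $\Crt(T_i) \ge 1$, the plan is to invoke the general inequality and show that $T_1^*$ and $T_2^*$ are non-planar, so that $\Cr(T_i^*) \ge 1$. The natural tool is Kuratowski's theorem: one exhibits a subdivision of $K_{3,3}$ (or $K_5$) inside $T_i^*$ whose branching paths use the tree skeletons, the matching edges, and the new edge $r_1r_2$. To show that the ordinary crossing number is strictly smaller, one provides explicit planar drawings of $T_1$ and $T_2$ without the tanglegram-layout constraints (vertices may be placed freely and edges drawn as curves), establishing $\Cr(T_1) = \Cr(T_2) = 0$.

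The hard part is the Kuratowski-type verification that $T_i^*$ is non-planar for each $i$. This is a finite check on a small graph, but it is somewhat tedious: one must identify branching vertices whose pairwise connecting paths are internally disjoint and lie entirely inside $T_i^*$, with the added edge $r_1 r_2$ playing an essential role in one of these paths. The general inequality, by contrast, is essentially just the observation that in any tanglegram layout the two roots are visible from the outer face, so that connecting them costs nothing.
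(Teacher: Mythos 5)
Your proposal is correct and takes essentially the same route as the paper: the paper likewise observes that in any layout both roots lie on the infinite face, so the edge $r_1r_2$ can be added without new crossings (giving $\Cr(T^*)\le\Crt(T)$), and then certifies $\Crt(T_1)=\Crt(T_2)=1$ via one-crossing layouts plus explicit $K_{3,3}$ subdivisions in $T_1^*$ and $T_2^*$ (Figure~\ref{fig:K3,3}), which is precisely the Kuratowski check you defer. The only cosmetic difference is that the paper obtains $\Cr(T_1)=\Cr(T_2)=0$ by noting that both graphs are subdivisions of $K_4$ and hence planar, rather than by exhibiting planar drawings.
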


\begin{proof} Since in any layout of $T$, the vertices $r_1$ and $r_2$ lie on the infinite face of the drawing, they can be connected with an edge without increasing the crossing number of the drawing, showing $\Cr(T+r_1r_2)\le\Crt(T)$.

Both $T_1$ and $T_2$ are subdivisions of $K_4$ and therefore are planar graphs.
Figure \ref{fig:K3,3} shows a layout with one crossing for the tanglegrams $T_1$ and $T_2$ and a subdivision of $K_{3,3}$ in the graphs
$T_1^*$ and $T_2^*$, showing $\Crt(T_1)=\Crt(T_2)=1$.

\begin{figure}[htbp]
\begin{center}
\begin{tikzpicture}
  \node[fill=black,circle,inner sep=1.5pt] at (-1,0) {};
        \node[fill=black,circle,inner sep=1.5pt]  at (1,-1) {};
        \node[fill=black,circle,inner sep=1.5pt]  at (1,1) {};
        \node[fill=black,rectangle,inner sep=2pt]  at (2,-1.5) {};
        \node[fill=black,rectangle,inner sep=2pt]  at (2,-.5) {};
        \node[fill=black,rectangle,inner sep=2pt]  at (2,.5) {};
        \node[fill=black,rectangle,inner sep=2pt]  at (2,1.5) {};

	\draw (-1,0)--(2,1.5);
	\draw (-1,0)--(2,-1.5);
	\draw (1,-1)--(2,-.5);
	\draw (1,1)--(2,.5);

        \node[fill=black,rectangle,inner sep=2pt]  at (3,-1.5) {};
        \node[fill=black,rectangle,inner sep=2pt]  at (3,-.5) {};
        \node[fill=black,rectangle,inner sep=2pt]  at (3,.5) {};
        \node[fill=black,rectangle,inner sep=2pt]  at (3,1.5) {};
        \node[fill=black,circle,inner sep=1.5pt]  at (4,1) {};
        \node[fill=black,circle,inner sep=1.5pt]  at (4,-1) {};
        \node[fill=black,circle,inner sep=1.5pt]  at (6,0) {};

	\draw (6,0)--(3,1.5);
	\draw (6,0)--(3,-1.5);
	\draw (4,-1)--(3,-.5);
	\draw (4,1)--(3,.5);

	\draw [dashed] (2,-1.5)--(3,-1.5);
	\draw [dashed] (2,-.5)--(3,.5);
	\draw [dashed] (3,-.5)--(2,.5);
	\draw [dashed] (2,1.5)--(3,1.5);

        \node[fill=black,circle,inner sep=1.5pt]  at (8,0) {};
       \node[fill=black,circle,inner sep=1.5pt]  at (9,-.5) {};
        \node[fill=black,circle,inner sep=1.5pt]  at (10,0) {};
        \node[fill=black,rectangle,inner sep=2pt]  at (11,-1.5) {};
        \node[fill=black,rectangle,inner sep=2pt]  at (11,-.5) {};
        \node[fill=black,rectangle,inner sep=2pt]  at (11,.5) {};
        \node[fill=black,rectangle,inner sep=2pt]  at (11,1.5) {};

	\draw (8,0)--(11,1.5);
	\draw (8,0)--(11,-1.5);
	\draw (9,-.5)--(10,0);
	\draw (10,0)--(11,.5);
	\draw (10,0)--(11,-.5);

        \node[fill=black,rectangle,inner sep=2pt]  at (12,-1.5) {};
        \node[fill=black,rectangle,inner sep=2pt]  at (12,-.5) {};
        \node[fill=black,rectangle,inner sep=2pt]  at (12,.5) {};
        \node[fill=black,rectangle,inner sep=2pt]  at (12,1.5) {};
        \node[fill=black,circle,inner sep=1.5pt]  at (13,0) {};
        \node[fill=black,circle,inner sep=1.5pt]  at (14,-.5) {};
        \node[fill=black,circle,inner sep=1.5pt]  at (15,0) {};

	\draw (15,0)--(12,1.5);
	\draw (15,0)--(12,-1.5);
	\draw (14,-.5)--(12,.5);
	\draw (13,0)--(12,-.5);

	\draw [dashed] (11,-1.5)--(12,-1.5);
	\draw [dashed] (11,-.5)--(12,-.5);
	\draw [dashed] (11,.5)--(12,1.5);
	\draw [dashed] (11,1.5)--(12,.5);
        
        \node at (-1,-.3) {$1$};
        \node at (1,.7) {$2'$};
        \node at (1,-1.3) {$3'$};
        \node at (4,.7) {$2$};
        \node at (4,-1.3) {$3$};
        \node at (6,-.3) {$1'$};

        \node at (15,-.3) {$1'$};
        \node at (14,-.8) {$3$};
        \node at  (13,-.3) {$2'$};
        \node at  (8,-.3) {$1$};
        \node at (9,-.8) {$3'$};
        \node at (10,-.3) {$2$};

\end{tikzpicture}
\end{center}
\caption{Finding copies of $K_{3,3}$ in tanglegrams No. 13 and No. 6.}\label{fig:K3,3}
\end{figure}
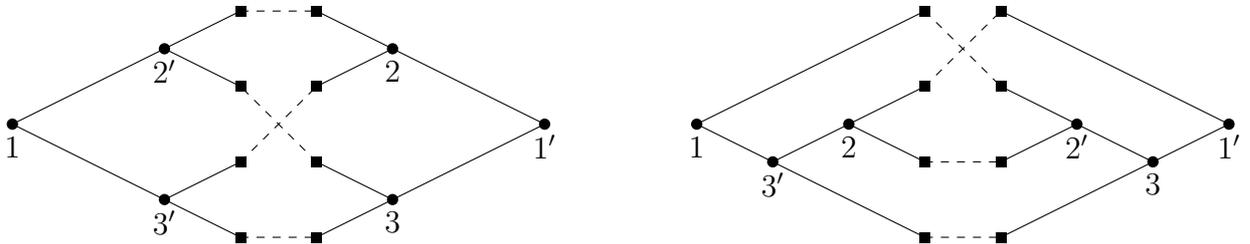
\end{proof}

\begin{remark} It is easy to see from F\'ary's Theorem that $\Crt(T)=0$ if and only if $\Cr(T^*)=0$.
\end{remark}

By the orbit-stabilizer theorem, a tanglegram $T$ has $2^{2n-2}/|A(T)|$ different layouts $(B_1,B_2,\sigma)$. Conversely, if we assign a weight $|A(B_1,B_2,\sigma)|/2^{2n-2}$ to every layout and sum over all possibilities, we obtain the total number of tanglegrams:
\begin{equation}\label{eq:tasymp}
t_n=\sum_{T\in \T_n} 1= \sum_{B_1}\sum_{B_2}\sum_\sigma \frac{|A(B_1,B_2,\sigma)|}{2^{2n-2}}\sim n!\frac{e^{1/8}C_n^2}{2^{2n-2}},
\end{equation}  
where the asymptotic formula for $t_n$ is taken from \cite{billey}. On the other hand,
\begin{equation}\label{eq:tasymplow}
t_n=\sum_{T\in \T_n} 1\geq \sum_{B_1}\sum_{B_2}\sum_\sigma \frac{1}{2^{2n-2}}= n!\frac{C_n^2}{2^{2n-2}} \sim \frac{t_n}{e^{1/8}}. 
\end{equation}

\begin{lemma}\label{lem:Aut}
For all $\epsilon>0$ there is a $K>0$ such that for all $n$
\begin{equation}\label{eq:fewbigAut}
\sum_{T\in \T_n \atop |A(T)|\geq K} 1 <\epsilon t_n.
\end{equation}
\end{lemma}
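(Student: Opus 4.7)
The plan is to bound the first moment $S_n := \sum_{T \in \T_n} |A(T)|$ and then invoke Markov's inequality: if $S_n \leq C \cdot t_n$ for some absolute constant $C$ independent of $n$, then
\[
\#\{T \in \T_n : |A(T)| \geq K\} \;\leq\; \frac{1}{K}\, S_n \;\leq\; \frac{C}{K}\, t_n,
\]
and choosing $K > C/\epsilon$ yields \eqref{eq:fewbigAut}. The task therefore reduces to establishing the uniform bound $S_n = O(t_n)$.

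To estimate $S_n$, I would use the orbit--stabilizer theorem: each tanglegram $T$ corresponds to exactly $2^{2n-2}/|A(T)|$ layouts, all sharing the same automorphism group of size $|A(T)|$. Weighting each layout $L$ by $|A(L)|^2$ and summing over the $n! C_n^2$ layouts gives
\[
\sum_{L} |A(L)|^2 \;=\; \sum_{T \in \T_n} \frac{2^{2n-2}}{|A(T)|} \cdot |A(T)|^2 \;=\; 2^{2n-2} S_n,
\]
so it suffices to prove $\sum_L |A(L)|^2 = O(2^{2n-2} t_n) = O(n! C_n^2)$. The first moment $\sum_L |A(L)| = 2^{2n-2} t_n$ is already implicit in \eqref{eq:tasymp} via orbit counting, so what is needed is essentially a variance-type bound for $|A(L)|$ under the uniform distribution on layouts. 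Expanding via Burnside's approach, $|A(L)|^2 = \sum_{(g_1,g_2) \in G \times G} \mathbf{1}[g_1 L = L = g_2 L]$, where $G \cong \{\pm 1\}^{2n-2}$ is the abelian group of left--right swaps at internal vertices of $B_1$ and $B_2$, whence
\[
\sum_L |A(L)|^2 \;=\; \sum_{(g_1, g_2) \in G \times G} |\mathrm{Fix}(g_1) \cap \mathrm{Fix}(g_2)|.
\]
The dominant contributions should come from pairs with small combined support, which constrain only a bounded number of internal vertices and hence a vanishing fraction of the possible matchings $\sigma$.

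The main obstacle is carrying out this joint fixed-point enumeration in detail: a pair of swap-group elements imposes coupled constraints on $B_1$, $B_2$, and $\sigma$ simultaneously, and tracking them requires the same type of structural analysis that already underlies the proof of \eqref{eq:tasymp} in \cite{billey}. An attractive alternative is to invoke the Konvalinka--Wagner structure theorem from \cite{KonvWag}, which describes the shape of a uniformly random tanglegram in enough detail to imply that $|A(T)|$ is bounded in probability (the Poisson heuristic suggests that $\log_2|A(T)|$ is asymptotically Poisson distributed with small parameter), from which \eqref{eq:fewbigAut} follows immediately.
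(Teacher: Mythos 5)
Your primary route has a genuine gap at its central step. The reduction via Markov's inequality is fine (and the ``for all $n$'' clause is harmless: each $\T_n$ is finite, so $K$ can be enlarged to cover the finitely many small $n$), and your identity $\sum_L |A(L)|^2 = 2^{2n-2} S_n$ correctly translates the needed first-moment bound $S_n = O(t_n)$ into a second-moment bound over layouts. But that bound, $\sum_L |A(L)|^2 = O(n!\,C_n^2)$, is precisely what you never establish. The Burnside expansion only reorganizes the problem: for each pair of swap patterns one must count the plane tree pairs admitting them as automorphisms (which forces prescribed pairs of subtrees to be isomorphic) and the matchings $\sigma$ compatible with both, and summing these joint fixed-point counts with uniform control is the same generating-function/asymptotic work that underlies \eqref{eq:tasymp} in \cite{billey} --- as you yourself concede (``the main obstacle''). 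Note also that the identities \eqref{eq:tasymp} and \eqref{eq:tasymplow} alone cannot substitute for this work: they only give $\EE_{\T_n}[1/|A(T)|] \to e^{-1/8}$, and since $1/|A(T)| \leq 1$ this yields merely $\PP(|A(T)| \geq 2) \leq 2\big(1-e^{-1/8}\big) + o(1)$, a fixed constant rather than an arbitrarily small $\epsilon$. So some genuinely new input beyond the enumeration formulas is unavoidable, and your sketch does not supply it.

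Your fallback, on the other hand, is exactly the paper's proof --- and what you hedge as a ``Poisson heuristic'' is in fact a theorem. The paper notes (by an easy induction) that $|A(T)|$ is always a power of $2$, and then cites the result of \cite{KonvWag} that $|\{T \in \T_n : |A(T)| = 2^k\}|/t_n \to e^{-1/4}/(4^k k!)$, i.e.\ $\log_2 |A(T)|$ is asymptotically Poisson with parameter $\frac14$. Since these limiting masses sum to $1$, the tail $\PP_{\T_n}\big[|A(T)| \geq 2^K\big]$ is smaller than $\epsilon$ for $K$ large and all large $n$, and enlarging $K$ handles the remaining small $n$; this gives \eqref{eq:fewbigAut} at once. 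So if you assert the \cite{KonvWag} limit law as the established fact it is, your closing remark is a complete proof identical to the paper's, whereas the moment/second-moment program --- a reasonable alternative in principle, with the appeal of staying self-contained modulo \cite{billey} --- is, as written, an unexecuted plan rather than a proof.
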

\begin{proof}
It is well-known---and easy to verify using induction---that the order of the  automorphism group of a binary tree is always a power of 2; hence this is also the case for tanglegrams.
Furthermore, it was shown in \cite{KonvWag} that the distribution of the order of the automorphism group is asymptotically Poisson, namely 
$$\lim_{n\rightarrow \infty} \frac{|\{T\in \T_n: |A(T)|=2^k|\}}{t_n} = \frac{e^{-1/4}}{4^{k}k!}.$$
This immediately proves the lemma.
\end{proof}
\begin{lemma}\label{lem:Var}
There are positive constants $c_1$ and $c_2$ such that 
for large enough $n$, arbitrarily fixed left- and right binary plane trees $B_1$ and $B_2$ with $n$ leaves and a matching $\sigma$ selected uniformly at random between the leaves of $B_1$ and $B_2$, for the random tanglegram 
$T_{\sigma}=(B_1,B_2,\sigma)$, we have
\begin{equation}\label{eq:bigE}
\EE_\sigma[\Crt(T_{\sigma})]\geq c_1n^2,
\end{equation}
and
\begin{equation}\label{eq:bigaa}
\PP_{\sigma}[\Crt(T_{\sigma})\geq c_2 n^2] \ge 1-\frac{1}{\sqrt{n}}.
\end{equation}
\end{lemma}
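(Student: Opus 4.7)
The plan is to apply the ``counting method'' from crossing number theory. For any tanglegram $T$ with $n$ matching edges and any $4$-subset $S$ of those edges, the induced sub-tanglegram $T_S$ inherits a valid layout from any layout of $T$ (delete the unused leaves and suppress the resulting degree-$2$ vertices), so the crossings among the four edges of $S$ in an optimal layout of $T$ number at least $\Crt(T_S)$. Since each pair of matching edges lies in exactly $\binom{n-2}{2}$ such $4$-subsets, double counting yields
\begin{equation*}
\Crt(T) \;\geq\; \frac{1}{\binom{n-2}{2}} \sum_{S \in \binom{[n]}{4}} \Crt(T_S).
\end{equation*}
Moreover $\Crt(T_S) \in \{0,1\}$, and by Lemma~\ref{lem:crt} together with the planarity of the remaining tanglegrams in Figure~\ref{fig:t4}, $\Crt(T_S) = 1$ exactly when $T_S$ is of type No.~$6$ or No.~$13$.

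To prove \eqref{eq:bigE} I would use only the cat-cat case (type No.~$6$). Set $X_S := \mathbf{1}\{\Crt(T_{\sigma,S}) = 1\}$. If $S$ induces $\C_4$ in $B_1$, then $\sigma(S)$ is a uniform random $4$-subset of the leaves of $B_2$, inducing $\C_4$ there with probability $\gamma(\C_4, B_2)$; conditionally on the shapes, $\sigma|_S$ is uniform over the $4! = 24$ bijections between the two $4$-element leaf sets, at least one of which produces tanglegram No.~$6$. Hence
$$\PP_\sigma[X_S = 1] \;\geq\; \gamma(\C_4, B_2) \cdot \tfrac{1}{24} \qquad \text{whenever $S$ induces $\C_4$ in $B_1$}.$$
By Theorem~\ref{thm:cater_pos} applied to both $B_1$ and $B_2$, the inducibility ratios $\gamma(\C_4, B_i)$ exceed $4/7 - o(1)$ for $n$ large. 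Summing over $S$ and combining with the counting bound yields $\EE_\sigma[\Crt(T_\sigma)] \geq c_1 n^2$ for some constant $c_1>0$.

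For the high-probability bound \eqref{eq:bigaa} I would apply Chebyshev's inequality to $Y := \sum_S X_S$. The key step is estimating $\mathrm{Var}(Y) = \sum_{S,S'} \mathrm{Cov}(X_S, X_{S'})$, which I would split according to $k := |S \cap S'|$. For $k \geq 1$ the number of pairs is $O(n^{8-k})$, contributing at most $O(n^7)$ under the trivial bound $|\mathrm{Cov}| \leq 1$. For $k = 0$, the crucial observation is that $\sigma|_S$ and $\sigma|_{S'}$ are nearly independent for disjoint $S, S'$:
$$\PP[\sigma|_S = \pi,\ \sigma|_{S'} = \pi'] \;=\; \PP[\sigma|_S = \pi]\,\PP[\sigma|_{S'} = \pi']\,\bigl(1 + O(1/n)\bigr),$$
giving $|\mathrm{Cov}(X_S, X_{S'})| = O(1/n)$ per pair and hence a total contribution $O(n^7)$. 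Thus $\mathrm{Var}(Y) = O(n^7)$, while $\EE[Y] = \Theta(n^4)$, and Chebyshev's inequality produces $\PP[Y < \EE[Y]/2] = O(1/n) \leq 1/\sqrt{n}$ for $n$ large. Combined once again with the counting bound, this yields \eqref{eq:bigaa}.

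The main obstacle I expect is the covariance estimate for disjoint $S, S'$: one must actually extract the $1/n$ factor instead of settling for the trivial bound, which requires a direct comparison of the joint and product marginals of a uniform random bijection restricted to two disjoint $4$-subsets. Everything else is a careful but routine combination of the counting method with Theorem~\ref{thm:cater_pos}.
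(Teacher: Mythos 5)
Your proposal is correct and follows essentially the same route as the paper's proof: the $\binom{n-2}{2}$-normalized counting bound over induced $4$-leaf sub-tanglegrams of type No.~6, Theorem~\ref{thm:cater_pos} supplying $\left(\frac47-o(1)\right)\binom{n}{4}$ copies of $\C_4$ in each tree (the paper notes there are exactly $4$ of the $24$ bijections giving No.~6, where you settle for ``at least one''), and a second-moment/Chebyshev argument whose bound $\mathrm{Var}(Y)=O(n^7)$ is exactly the paper's $O(n^3)$ after the $\binom{n-2}{2}$-rescaling. The one point to tidy is that your displayed near-independence identity holds only when $\pi$ and $\pi'$ have disjoint images (otherwise the left-hand side vanishes while the right-hand side is positive), but since such overlapping configurations number $O(n^7)$ out of $\Theta(n^8)$ and each carries product weight $\Theta(n^{-8})$, the covariance bound $O(1/n)$ per disjoint pair, and hence your conclusion, stand.
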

We postpone the proof of Lemma~\ref{lem:Var} and prove the main result on the crossing number of random tanglegrams.

\begin{theorem}\label{th:cr} There is a constant $c_3$ such that 
for a tanglegram $T\in\T_n$ selected uniformly at random,
\begin{equation}\label{eq:bigaaT}
\PP_{\T_n}[\Crt(T)\geq c_3 n^2] =1-o(1)
\end{equation}
as $n\rightarrow \infty$, and hence
\begin{equation}\label{eq:bigET}
\EE_{\T_n}[\Crt(T)] \geq (1-o(1))c_3 n^2.
\end{equation}
\end{theorem}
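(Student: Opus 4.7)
The plan is to establish the high-probability statement \eqref{eq:bigaaT} first and then deduce \eqref{eq:bigET} from it, taking $c_3 = c_2$ where $c_2$ is the constant from Lemma~\ref{lem:Var}. The core of the argument is a transfer from the ``uniform-over-layouts'' distribution (to which Lemma~\ref{lem:Var} applies directly) to the uniform distribution on the set $\T_n$ of tanglegrams; these two distributions differ by a factor proportional to $|A(T)|$, so the main technical ingredient apart from Lemma~\ref{lem:Var} is Lemma~\ref{lem:Aut}, which says almost all tanglegrams have bounded automorphism group.

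First I would apply Lemma~\ref{lem:Var} for every fixed pair of plane binary trees $(B_1,B_2)$ and sum: since for each such pair there are $n!$ matchings and at most $n!/\sqrt{n}$ of them yield a layout with $\Crt<c_2 n^2$, the total number of layouts $(B_1,B_2,\sigma)$ with $\Crt<c_2 n^2$ is at most $C_n^2\cdot n!/\sqrt{n}$. Now fix $\epsilon>0$ and let $K=K(\epsilon)$ be the constant from Lemma~\ref{lem:Aut}, so that at most $\epsilon t_n$ tanglegrams $T$ satisfy $|A(T)|\geq K$. For the remaining tanglegrams (with $|A(T)|<K$), each one corresponds to at least $2^{2n-2}/K$ distinct layouts by the orbit--stabilizer count, so
$$|\{T\in\T_n:\Crt(T)<c_2 n^2,\ |A(T)|<K\}|\leq \frac{K}{2^{2n-2}}\cdot\frac{C_n^2\cdot n!}{\sqrt{n}}\leq \frac{K t_n}{\sqrt{n}},$$
where the last step invokes \eqref{eq:tasymplow}. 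Combining the two cases gives $|\{T:\Crt(T)<c_2 n^2\}|\leq \epsilon t_n + Kt_n/\sqrt{n}$, and because $\epsilon$ was arbitrary this is $o(t_n)$, establishing \eqref{eq:bigaaT} with $c_3=c_2$.

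For \eqref{eq:bigET} I would simply note that $\Crt(T)\geq 0$ always, so $\EE_{\T_n}[\Crt(T)]\geq c_3 n^2\cdot \PP_{\T_n}[\Crt(T)\geq c_3 n^2]=(1-o(1))c_3 n^2$. The conceptually delicate point is really the double bookkeeping between layouts and tanglegrams; provided Lemmas~\ref{lem:Var} and \ref{lem:Aut} are in hand, the theorem itself follows almost formally, so the main obstacle is in fact the proof of Lemma~\ref{lem:Var}, which rests on the ``counting method'' of crossing-number theory applied together with the positive-proportion results for induced tanglegrams of size~$4$ (ultimately resting on Theorem~\ref{thm:cater_pos} and Lemma~\ref{lem:crt}) developed in the earlier sections.
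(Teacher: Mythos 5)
Your proposal is correct and follows essentially the same route as the paper's own proof: the paper likewise splits off the at most $\epsilon t_n$ tanglegrams with $|A(T)|\geq K$ via Lemma~\ref{lem:Aut}, and bounds the rest by the weighted layout sum $\sum_{B_1}\sum_{B_2}\sum_\sigma |A(B_1,B_2,\sigma)|/2^{2n-2}\leq K\cdot C_n^2\, n!/(2^{2n-2}\sqrt{n})=O(Kt_n/\sqrt{n})$ using \eqref{eq:bigaa} and \eqref{eq:tasymplow}, which is exactly your orbit--stabilizer bookkeeping in a slightly different notation. Your derivation of \eqref{eq:bigET} from \eqref{eq:bigaaT} via nonnegativity of $\Crt$ is also the paper's (implicit) argument, so there is nothing to correct.
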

\begin{proof}
Clearly (\ref{eq:bigaaT}) implies (\ref{eq:bigET}), so we have to prove (\ref{eq:bigaaT}). Select an arbitrary small $\epsilon>0$ and a corresponding $K$ from Lemma~\ref{lem:Aut}.
From (\ref{eq:tasymplow}) and (\ref{eq:bigaa}) we obtain, for large enough $n$,
\begin{align*}
\sum_{T\in \T_n\atop \Crt(T)< c_2n^2} 1 & \leq \epsilon t_n+ \sum_{B_1}\sum_{B_2}\sum_{\sigma 
:    |A(B_1,B_2,\sigma)\leq K \atop  \Crt(B_1,B_2,\sigma)< c_2n^2 }\frac{|A(B_1,B_2,\sigma)|}{2^{2n-2}}\\
& \leq  \epsilon t_n+K \sum_{B_1}\sum_{B_2}\sum_{\sigma:  \Crt(B_1,B_2,\sigma)< c_2n^2   }  \frac{1}{2^{2n-2}}\\
&\leq \epsilon t_n+\frac{K}{\sqrt{n}}\sum_{B_1}\sum_{B_2}\sum_{\sigma }  \frac{1}{2^{2n-2}}= \epsilon t_n+ O\Big(  \frac{K}{\sqrt{n}} t_n\Big).
\end{align*}
\end{proof}

\begin{proof}[Proof of Lemma~\ref{lem:Var}:] 
Assume that $B_1$ induces $h_1$ and $B_2$ induces $h_2$ caterpillars $\C_4$ on four vertices. 
Theorem~\ref{thm:cater_pos} shows that $h_1,h_2$ are asymptotically at least $\frac47 {n\choose 4}$. Looking at the 13 tanglegrams of size 4 (Fig.~\ref{fig:t4}), we recognize that 
most of them have tangle crossing number 0, except No. 6 and No. 13, which have tangle crossing number 1 by Lemma~\ref{lem:crt}. No. 6 has a matching between two $\C_4$'s, No. 13 has a matching between two $\CB_2$'s. We focus on pairs of $\C_4$'s, as
they already provide what we need.

Let $H_1$ (resp. $H_2$) denote the set of all $4$-element subsets of leaves inducing a $\C_4$ in $B_1$ (resp. in $B_2$).  Clearly $|H_i|=h_i$ for $i=1,2$. Select 
a matching $\sigma $ uniformly at random between the leaves of $B_1$ and $B_2$. For $U\in H_1, V\in H_2$, let $X_{UV}(\sigma)=1/{n-2 \choose 2}$ if $\sigma$ matches the vertices
of $U$ to the vertices of $V$, and the induced tanglegram on 4 vertices is the tanglegram No. 6. Otherwise let $X_{UV}(\sigma)=0$.

Now we observe that for every outcome $\sigma$,
\begin{equation} \label{eq:crlower}
 \Crt(B_1,B_2,\sigma)\geq \sum_{U\in H_1}\sum_{V\in H_2} X_{UV}(\sigma).
\end{equation} 
To see this, consider an optimal layout of tanglegram $(B_1,B_2,\sigma)$. Whenever an induced tanglegram on 4 vertices is tanglegram  No. 6, some two edges of the matching must
cross. This crossing, however, may arise from many other copies of No. 6. Such copies must contain two further edges from the matching, limiting the number of such copies to at most ${n-2 \choose 2}$. This proves~\eqref{eq:crlower}.

Taking the expected value with respect to $\sigma$ on both sides of (\ref{eq:crlower}), we obtain
\begin{equation} \label{eq:crlowerE}
\EE_{\sigma}[ \Crt(B_1,B_2,\sigma)]\geq \sum_{U\in H_1}\sum_{V\in H_2}  \EE_{\sigma}[X_{UV}].
\end{equation}
Observe that $\EE_{\sigma}[X_{UV}]=\frac{4(n-4)!}{{n-2 \choose 2}n!}$: there are four ways to connect the leaves of two $C_4$'s in such a way that a tanglegram isomorphic to No. 6 is obtained, and then $(n-4)!$ ways to connect the remaining leaves. Hence (using the notation $(n)_k = n!/(n-k)!$)
\begin{equation} \label{eq:crlowerEasymp}
\EE_{\sigma}[ \Crt(B_1,B_2,\sigma)]\geq \frac{4h_1h_2}{{n-2 \choose 2}(n)_4}\geq \frac{2-o(1)}{441}n^2,
\end{equation}
where the $o(1)$ terms depends on $n$ only, but not on $B_1$ or $B_2$. Formula (\ref{eq:crlowerEasymp}) verifies the first claim of the lemma, (\ref{eq:bigE}).
 
To prove the second claim, we use the second moment method.  Set $X=\sum_{U\in H_1}\sum_{V\in H_2} X_{UV}$. We estimate
\begin{align} \nonumber
{\rm Var}[X] &= \EE[(X-\EE[X])^2]\\
&=\sum_{U\in H_1}\sum_{V\in H_2} {\rm Var}[X_{UV}]+ \sum_{U\in H_1\atop V\in H_2}   \sum_{W\in H_1\atop {Z\in H_2 \atop \{U,V\}\not= \{W,Z\} }} \Big(
\EE[X_{UV}X_{WZ}]-\EE[X_{UV}]\EE[X_{WZ}]
\Big).   \label{eq:Var}
\end{align}

It is easy to see that $\sum_{U\in H_1}\sum_{V\in H_2} {\rm Var}[X_{UV}]$ is bounded above by an absolute constant. In the second summation,
whenever $|U\cap W|\not= |V\cap Z|$, we have $X_{UV}X_{WZ}=0$, so that the covariance term is less than 0. We are left with the cases
$|U\cap W|= |V\cap Z|=i$, for $i=0,1,2,3$. 

If $|U\cap W|= |V\cap Z|$, then we have 
$$\EE[X_{UV}X_{WZ}] = \frac{16(n-8)!}{{n-2\choose 2}^2n!} = \frac{16}{{n-2\choose 2}^2(n)_8},$$
since there are $4^2 = 16$ ways to connect $U$ and $V$ respectively $W$ and $Z$ in such a way that two copies of tanglegram No. 6 are formed, as well as $(n-8)!$ ways to connect the other leaves. Thus the total contribution of the covariance terms for $i=0$ is bounded above by
$${n \choose 4}^4 \Bigg( \frac{16}{{n-2\choose 2}^2(n)_8}-   \frac{16}{{n-2\choose 2}^2(n)_4^2}   \Bigg) = O(n^3).$$      
Likewise, the covariance terms for $i=1,2,3$ can be estimated by
$$O(n^{16-2i})  \Bigg( \frac{1}{{n-2\choose 2}^2(n)_{8-i}}-   \frac{1}{{n-2\choose 2}^2(n)_4^2}   \Bigg) = O(n^{4-i}).$$  
We conclude that ${\rm Var}[X]=O(n^3)$. Chebyshev's inequality yields
$$\PP_\sigma\bigg[|X-\EE_\sigma[X]|\geq k\sqrt{{\rm Var}[X]}\bigg] \leq \frac{1}{k^2}.$$
Setting $k=n^{1/4}$ means
$$\PP_\sigma\bigg[\Crt(B_1,B_2,\sigma)\geq  \EE_\sigma[X] -n^{1/4}\sqrt{{\rm Var}[X]}
\bigg]\geq \PP_\sigma\bigg[X\geq  \EE_\sigma[X] -n^{1/4}\sqrt{{\rm Var}[X]}\bigg] \geq 1-\frac{1}{\sqrt{n}},$$
proving (\ref{eq:bigaa}).
\end{proof}


\end{document}